\documentclass[11pt]{amsart}

\usepackage{amsmath,amssymb,amsthm}
\usepackage[hidelinks]{hyperref}
\usepackage{booktabs}
\usepackage{tikz}
\usepackage{url}

\newtheorem{theorem}{Theorem}[section]
\newtheorem*{theorem*}{Theorem}
\newtheorem{proposition}[theorem]{Proposition}
\newtheorem{lemma}[theorem]{Lemma}
\newtheorem{corollary}[theorem]{Corollary}
\newtheorem{conjecture}[theorem]{Conjecture}
\theoremstyle{definition}

\theoremstyle{remark}
\newtheorem{remark}[theorem]{Remark}

\newcommand{\Q}{\mathbb{Q}}
\newcommand{\Z}{\mathbb{Z}}
\newcommand{\C}{\mathbb{C}}
\newcommand{\R}{\mathbb{R}}
\newcommand{\T}{\mathbb{T}}
\DeclareMathOperator{\period}{period}
\DeclareMathOperator{\class}{class}
\DeclareMathOperator{\divisor}{div}

\title[Boyd's conductor-$11$ conjecture]{Boyd's conductor-$11$ Mahler measure
conjecture:\\ proof of the split-integral identity \textup{(C3)},
with an exact structural analysis of the family $S_k$}

\author{Huimin Zheng}
\address{College of Information and Network Engineering,
Anhui Science and Technology University,
Fengyang, Anhui 233100, P.~R.~China}
\email{zhhm@ahstu.edu.cn}

\subjclass[2020]{Primary 11R06, 11G40; Secondary 11G05, 11G55, 65G30}
\keywords{Mahler measure, elliptic curve, $L$-function, Beilinson regulator,
modular units, interval arithmetic, certified computation}

\date{August 5, 2026}

\begin{document}

\begin{abstract}
Boyd's 1998 tables of conjectural identities $m(P)=r\,|L'(E,0)|$ between
Mahler measures of two-variable polynomials and $L$-values of elliptic curves
begin with the smallest possible conductor, $N=11$. Two of the three
conductor-$11$ identities were proved by Brunault via an explicit version of
Beilinson's theorem on modular units. The third one, (C3), concerns the
polynomial $S_0=y^2+(x^2+1)y+x^3$, which vanishes on the unit torus, and
asserts that a \emph{signed split integral} $I_{\mathrm{split}}$ of $\log|y|$
around the branch cut equals $b_{11}=L'(E_{11},0)$. We prove (C3). The proof
identifies the split integral with a regulator integral along Samart's signed
open chain $\tilde\gamma$; we close $\tilde\gamma$ by a small-branch
compensating arc $\beta_0$ into a closed anti-invariant integral cycle
$C'=\tilde\gamma+\beta_0$, and prove that its homology class is $2\gamma^-$,
where $\gamma^-$ generates $H_1(E,\Z)^-$: the period ratio
$\period(C')/w_{\mathrm{anti}}$ is a-priori an integer, and a
ball-arithmetic (Arb) computation with certified error bounds pins it to
$2$ (in particular it is non-zero). Combined with the exact integral identity
$\int_{\beta_0}\eta=\int_{\tilde\gamma}\eta$ and a direct regulator
computation via Brunault's proved Siegel-unit formula---the symbol
$\{x,y\}$ being a pair of modular units on $X_1(11)$, so no use of Bloch's
diamond theorem is needed---this yields $I_{\mathrm{split}}=b_{11}$; the sign is certified in interval
arithmetic, and the identity agrees with the numerical value to $366$
digits. For the family $S_k=y^2+(x^2+kx+1)y+x^3$ we determine exactly the
torus intersections (as a function of real $k$), the single case where
$x,y$ are modular units ($k=0$), and the cases where the symbol is
tempered with torsion-supported divisors; all further Boyd-type evaluations
we find are numerical
observations, recorded as conjectures ($k=2,3$ to $70$ digits, $k=-4,-5,-6$
to $25$ digits, PARI/GP cross-checked). At $k=-1$ (conductor $53$) the
closed-cycle/modular-unit mechanism provably fails; Samart's suggested
analogue, for which no formula is on record, is left open. An appendix
applies the same method to Samart's conductor-$17$ analogue
$\tilde n(1)=b_{17}$, conditional on a normalization lemma in the
conventions of Lal\'in--Ramamonjisoa. All computations are
reproducible from the accompanying
code; every certification step is carried out within
interval arithmetic.
\end{abstract}

\maketitle

\section*{Declaration on the use of AI tools}
The research reported in this article --- including the computational
exploration, the discovery of the proof strategy, the machine-certified
verifications, and the preparation of the manuscript --- was carried out
by the author with the assistance of the AI system \emph{Kimi}
(Moonshot AI). All mathematical content, including every proof and
every certified computation, has been checked and verified by the
author, who takes full responsibility for the correctness and integrity
of the article. All certification scripts are available for independent
verification (see Section~\ref{sec:cert} and Appendix~\ref{app:code}).

\tableofcontents

\section{Introduction}

\subsection{Boyd's conductor-$11$ question}

Let $m(P)$ denote the logarithmic Mahler measure of a Laurent polynomial $P$.
Boyd's systematic numerical experiments \cite{Boyd98} produced dozens of
conjectural identities of the shape
\[
  m(P_k)=r_k\,|L'(E_k,0)|,\qquad r_k\in\Q,
\]
organised by the conductor $N$ of the elliptic curve $E_k$ cut out by
$P_k=0$. The smallest possible conductor of an elliptic curve over $\Q$ is
$N=11$ (there are no elliptic curves of conductor $1$ through $10$), so the
conductor-$11$ rows head Boyd's table. Three polynomials of conductor $11$
appear:
\begin{align}
  m\big((1+x)(1+y)(1+x+y)+xy\big)&=7\,b_{11}, \tag{C1}\\
  m\big(y^2+(x^2+2x-1)y+x^3\big)&=5\,b_{11}, \tag{C2}\\
  I_{\mathrm{split}}&=\pm\,b_{11}, \tag{C3}
\end{align}
where $b_{11}=L'(E_{11},0)$ and $E_{11}$ is any curve in the (unique)
isogeny class of conductor $11$. Identities (C1) and (C2) were proved by
Brunault \cite{Brunault} via his explicit version of Beilinson's regulator
theorem for modular units on $X_1(11)$.

The third case is different in kind. The polynomial
\[
  S_0=y^2+(x^2+1)y+x^3
\]
defines an elliptic curve of conductor $11$, but it \emph{vanishes on the
unit torus}: at $x=\pm i$ the two roots satisfy $|y|=1$. No rational
relation between the Mahler measure itself and $b_{11}$ is known:
\[
  m(S_0)=0.40560295591501040\ldots,
\]
and none was found by PSLQ searches with coefficient bound $10^8$ (Boyd had
reported the same negative finding). Boyd nevertheless discovered that the \emph{signed
integral around the branch cut} still equals $b_{11}$: writing the two roots
of $S_0=0$ as $y_\pm(x)$ and splitting the upper half-torus at the
torus-intersection point $\theta=\pi/2$,
\[
  I_{\mathrm{split}}
  :=\underbrace{\frac1\pi\int_0^{\pi/2}\log|y_-(e^{i\theta})|\,d\theta}_{I_1}
  \;-\;\underbrace{\frac1\pi\int_{\pi/2}^{\pi}\log|y_-(e^{i\theta})|\,d\theta}_{I_2}
  \ \stackrel?=\ \pm b_{11}. \tag{C3}
\]
Here $y_-$ is the \emph{continuous branch} of the smaller-modulus root on
$|x|=1$: at $\theta=0$ the two roots coalesce at $y_-(1)=-1$, on
$(0,\pi)\setminus\{\pi/2\}$ the branch is uniquely determined by
$|y_-|<|y_+|$, and at the second node $\theta=\pi/2$ it is extended by
continuity (at both nodes $|y_\pm|=1$, so $\log|y_-|=0$ there and the
integrand is unaffected by the choice of value).
Boyd's own words (quoted in \cite{Samart2023}):
\begin{quote}
  ``This is in accord with our contention that in case $P$ vanishes on the
  torus, it is the integral of $\omega$ around a branch cut rather than
  $m(P)$, which should be rationally related to $L'(E,0)$.''
\end{quote}
Samart \cite{Samart2023} records (C3) as an open identity in Boyd's
``$=$?'' conjectural notation (his eq.~(4.1)).
(Samart's eq.~(4.1) writes the right-hand side as $-L'(E,0)$; the integral
on the left, with Samart's own definition of $y_-$, is the positive number
$+0.1521471\ldots$, so the sign there appears to be a typo --- with our
convention for $y_-$ the identity reads $+b_{11}$, hence the $\pm$ in
\textup{(C3)}.)

\subsection{Main result}

\begin{theorem*}[Theorem~\ref{thm:C3}]
$I_{\mathrm{split}}=b_{11}$, with $I_{\mathrm{split}}$ the signed split
integral of \textup{(C3)} and $b_{11}=L'(E_{11},0)$.
\end{theorem*}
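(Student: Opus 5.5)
The plan follows the abstract's outline and uses the standard Deninger--Rodriguez-Villegas dictionary between Mahler-type integrals and the regulator form
\[
\eta(x,y)=\log|x|\,d\arg y-\log|y|\,d\arg x .
\]

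\emph{Step 1: rewrite the split integral as a regulator integral.} On $|x|=1$ we have $\log|x|=0$ and $d\arg x=d\theta$. Hence
\[
\frac1\pi\int\log|y_-|\,d\theta=-\frac1\pi\int\eta(x,y)
\]
along the lift $\theta\mapsto(e^{i\theta},y_-(e^{i\theta}))$ of the torus arc to $E:S_0=0$. Splitting at $\theta=\pi/2$ and reversing the orientation of the second piece turns $I_1-I_2$ into $-\frac1\pi\int_{\tilde\gamma}\eta$. Here $\tilde\gamma$ is Samart's signed open chain, running from the node over $x=1$ to the nodes over $x=\pm i$.

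Its endpoints lie on the torus, where $|x|=|y|=1$. The chain is therefore not closed, but $\eta$ vanishes in the appropriate sense at the endpoints. Using the anti-holomorphic involution $(x,y)\mapsto(\bar x^{-1},\bar y^{-1})$, or complex conjugation, I will check that $\eta$ pulls back with a sign under which $\tilde\gamma$ is anti-invariant.

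\emph{Step 2: close the chain.} I will choose a compensating path $\beta_0$ on the other (small) branch joining the same endpoints, so that $C'=\tilde\gamma+\beta_0$ is a closed cycle, anti-invariant under conjugation. The aim is the exact identity
\[
\int_{\beta_0}\eta=\int_{\tilde\gamma}\eta,
\]
which I would get from the symmetry $y_+y_-=x^3$. On $|x|=1$ this gives $\log|y_+|=-\log|y_-|$, so the two branches contribute equal amounts with the orientation of $\beta_0$. Then
\[
\int_{C'}\eta=2\int_{\tilde\gamma}\eta .
\]

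\emph{Step 3: identify the homology class of $C'$.} Since $H_1(E,\Z)^-\cong\Z\gamma^-$, the class of $C'$ is $n\gamma^-$ for some integer $n$, namely $n=\period(C')/w_{\mathrm{anti}}$. I would compute $\int_{C'}dx/(\partial S_0/\partial y)$ in Arb with rigorous error bounds. The enclosure will lie strictly within distance $1/2$ of $2$, which forces $n=2$.

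\emph{Step 4: compute the regulator.} The divisors of $x$ and $y$ on $E$ are supported on rational torsion, namely the $5$ cusps of $X_1(11)\cong E$. So $x$ and $y$ are modular units, and Brunault's explicit formula applies directly:
\[
\int_{\gamma^-}\eta(x,y)=c\,L'(E,0)
\]
with an explicit rational constant $c$. Computing $c$ requires expressing $x$ and $y$ as products of Siegel units. I expect this to give $\int_{C'}\eta=2c\,b_{11}$ and hence $I_{\mathrm{split}}=-(c/\pi)\,b_{11}$ up to normalization; the expected outcome is $I_{\mathrm{split}}=b_{11}$. The sign is fixed by an interval enclosure of $I_{\mathrm{split}}>0$.

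\emph{Main obstacle.} I expect the hardest part to be Steps 2 and 3: constructing $\beta_0$ so that $C'$ is genuinely a closed integral cycle through the nodes (over $x=\pm i$ the chain passes through singular points of the torus intersection) and proving rigorously that its class is $2\gamma^-$. A secondary difficulty is bookkeeping Brunault's normalizations (the factor $\pi$ and the choice of $w_{\mathrm{anti}}$) so that the final rational constant is exactly $1$ rather than just numerically close to it.
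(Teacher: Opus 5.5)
Your outline has the paper's architecture: regulator rewrite, small-branch closing arcs with $\int_{\beta_0}\eta=\int_{\tilde\gamma}\eta$, $\class(C')=2\gamma^-$ by a-priori integrality plus Arb, Brunault's Siegel-unit formula, and the sign by enclosure. However, the chain you set up in Step~1 is not the one that closes, and the normalization you derive from it is off by a factor of $2$.

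Your $\tilde\gamma$ lives over $[0,\pi]$ and gives $I_{\mathrm{split}}=-\frac1\pi\int_{\tilde\gamma}\eta$. That chain is not anti-invariant, since conjugation moves it over $[-\pi,0]$. Its endpoint over $x=-1$ is $(-1,\sqrt2-1)$, which is not on the torus, so no torus arcs close it. The chain that closes is the full-circle one: $y_{\mathrm{big}}$ with weight $+1$ on $[-\pi/2,\pi/2]$ and $-1$ on the two outer arcs. It passes through the ramification point over $x=1$, which is not an endpoint. Its boundary $D=[P]-[\bar P]+[-P]-[-\bar P]$ sits only at the fold points over $x=\pm i$.

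Closing it uses the branch swap at the folds. Across $\theta=\pi/2$, $y_{\mathrm{big}}$ jumps from $-P$ to $P$ while $y_{\mathrm{small}}$ jumps from $P$ to $-P$, and conjugately at $-\pi/2$. Hence $\alpha_2$ runs $P\to\bar P$ on the inner arc and $\alpha_1$ runs $-P\to-\bar P$ on the outer arcs. For this chain $\int_{\tilde\gamma}\eta=2\pi I_{\mathrm{split}}$, so the needed anchor is $\int_{\gamma^-}\eta(x,y)=\pm2\pi b_{11}$. Your formula $I_{\mathrm{split}}=-(c/\pi)b_{11}$ instead requires $c=-\pi$. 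With the true value $|c|=2\pi$ it would give $|I_{\mathrm{split}}|=2b_{11}$.

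The more serious gap is Step~4. ``$c$ explicit, expected outcome $I_{\mathrm{split}}=b_{11}$'' is the theorem itself, not a step toward it. The paper actually carries the computation out:
\begin{itemize}
\item It derives the exact cusp-to-torsion table $k/11\mapsto m_kA$.
\item It proves the Siegel presentations $x\circ\pi=-G_4G_5/G_2^2$ and $y\circ\pi=G_1G_5^3/(G_2^3G_3)$, whose constants $C_x=-1$, $C_y=1$ have modulus one. This matters, because $\int_{\gamma^-}d\arg U=2\pi\neq0$.
\item It shows the explicit symbol $\{0,\tfrac3{11}\}-\{0,\tfrac8{11}\}$ generates $H_1(E,\Z)^-$ by a Smith normal form, and splits it into seven Manin symbols.
\item It proves $F_{\mathrm{total}}=-2f_{11}$ in $M_2(\Gamma_1(121))$ by Sturm's bound, which yields $-2\pi b_{11}$.
\end{itemize}

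The factor $2$ here is structural. On this $\Delta<0$ curve, $\gamma^-=a-2b$ is twice a generator of $H_1(E,\Z)/H_1(E,\Z)^+$, so its integral is twice the diamond value $D_E((x)\diamond(y))=\pi b_{11}$. A subgroup reading of Bloch's formula would hand you $|c|=\pi$. Combined with your half-chain normalization, that gives the right answer through two cancelling errors, which is why the constant has to be computed rather than matched.

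Finally, moving $\int\eta$ from $C'$ to $\gamma^-$ requires homology invariance. This needs two things: the real residues of $\eta$ vanish on $\operatorname{supp}\divisor(x)\cup\operatorname{supp}\divisor(y)$ (the tame symbols are roots of unity, by temperedness), and $C'$ avoids that support. Being modular units does not give this by itself.
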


The proof combines exact algebra with computer-assisted steps. Everything
outside the following list is exact (rational or symbolic arithmetic):
\begin{itemize}
  \item the modulus ordering of Proposition~\ref{prop:structural} and the
  branch assignments of the closed chain, proved in ball arithmetic
  (\S\ref{subsec:rigour});
  \item the homology class $\class(C')=2\gamma^-$, proved in ball arithmetic
  from the a-priori integer $\period(C')/w_{\mathrm{anti}}$
  (\S\ref{subsec:class}, \S\ref{subsec:rigour});
  \item the signs of the regulator integrals, proved by certified interval
  enclosures strictly contained in a half-line (\S\ref{subsec:rigour});
  \item high-precision floating-point evaluations used only as consistency
  checks (e.g.\ the $366$-digit agreement with $b_{11}$), never as proof
  steps.
\end{itemize}
Conventions: $\gamma^-$ is the primitive anti-invariant generator of
Lemma~\ref{lem:antiinv}; $\eta(x,y)=\log|x|\,d\arg y-\log|y|\,d\arg x$;
$\Z[E]^-$ is the quotient convention of \cite{LalinRam}; $D_E$ is the
elliptic dilogarithm of \S\ref{sec:background}, implemented verbatim from
\cite[Def.~5, eq.~(10)]{LalinRam}.

\subsection{Summary of results}

\begin{enumerate}
  \item \textbf{Proof of (C3).} We prove
  $I_{\mathrm{split}}=b_{11}$ (Theorem~\ref{thm:C3}). The proof identifies
  Boyd's split-integral chain with a closed anti-invariant integral cycle
  $C'=\tilde\gamma+\beta_0$ on $E$ whose homology class is $2\gamma^-$, where
  $\gamma^-$ generates $H_1(E,\Z)^-$; the integrality argument is made
  rigorous by a ball-arithmetic (Arb) computation of the period ratio with
  certified error bounds (\S\ref{subsec:rigour}). The regulator side
  is closed by a direct computation: $\{x,y\}$ is a pair of modular units
  on $X_1(11)$, and Brunault's proved Siegel-unit regulator formula
  \cite{BrunaultSiegel} evaluates $\int_{\gamma^-}\eta(x,y)=\pm2\pi b_{11}$
  through exact Siegel presentations and an exact weight-$2$ identity
  $F_{\mathrm{total}}=-2f_{11}$ (\S\ref{sec:proof}).
  \item \textbf{The conductor-$17$ analogue (appendix).} The same method is
  applied to Samart's conductor-$17$ conjecture $\tilde n(1)=b_{17}$
  (Theorem~\ref{thm:k1}, Appendix~\ref{app:k1}): the chain construction is
  verbatim parallel, the homology class is again certified in interval
  arithmetic, and the regulator side is closed by a published theorem of
  Lal\'in--Ramamonjisoa \cite{LalinRam}; its status is conditional on the
  normalization lemma discussed there.
  \item \textbf{High-precision numerics.} (C3) is confirmed to $366$ digits:
  $|I_{\mathrm{split}}-b_{11}|=9.26\times10^{-367}$, with $b_{11}$
  cross-checked against PARI/GP's \texttt{lfun} to $330$ digits (previous
  public record: Boyd's $50$ digits \cite[p.~28]{Boyd2015}). (C1) and
  (C2) are independently reproduced to $52$ digits.
  \item \textbf{Structural identity.} On $[0,\pi]$ one has
  $|y_-(e^{i\theta})|\le 1\le |y_+(e^{i\theta})|$, hence the exact identity
  $I_1+I_2=-m(S_0)$, which reduces (C3) to the individual values
  $I_1=(b_{11}-m(S_0))/2$, $I_2=-(b_{11}+m(S_0))/2$
  (Proposition~\ref{prop:structural}).
  \item \textbf{Exact structure of the family $S_k$; numerical dichotomy.}
  For $S_k=y^2+(x^2+kx+1)y+x^3$ we determine exactly the torus intersections
  as a function of \emph{real} $k$ (Proposition~\ref{prop:torus}: the torus
  meets the curve iff $k\in[-4,2]$) and the modular-unit/temperedness
  structure (Proposition~\ref{thm:family}: $x,y$ are modular units only at
  $k=0$; at $k=1$ the symbol is tempered with torsion-supported divisors,
  which suffices for the conductor-$17$ argument; at $k=-3,-2,-1,2,3$ the
  symbol is not cuspidal). The resulting dichotomy matches all numerical
  evidence:
  Boyd-type identities are proved for $k=0$ and (conditionally on the
  normalization lemma) $k=1$; observed
  numerically for $k\notin[-4,2]$ in all computed cases
  (Conjecture~\ref{conj:family}); and no rational relation is found within
  the PSLQ bound $10^8$ for $k=-1,-2,-3$.
  \item \textbf{New numerically confirmed identities} (70 digits, PARI/GP
  cross-checked): $m(S_2)=2|L'(E_{37},0)|$, $m(S_3)=|L'(E_{79},0)|$, and the
  predicted-then-confirmed identities
  $m(S_{-4})=\tfrac72|L'(E_{37},0)|$,
  $m(S_{-5})=\tfrac14|L'(E_{359},0)|$,
  $m(S_{-6})=\tfrac18|L'(E_{997},0)|$.
  \item \textbf{Failure of the mechanism at conductor $53$.} For $k=-1$
  (curve \texttt{53.a1}) the mechanism that proves (C3) does not extend:
  there is no non-trivial anti-invariant closed cycle on the torus, and
  $x,y$ are not modular units on the rank-$1$ curve
  (Proposition~\ref{thm:k53}). Samart's remark mentioning a conductor-$53$
  analogue has no formula on record and is left open.
\end{enumerate}

\subsection{Organisation and reproducibility}

Section~\ref{sec:background} collects the background on Mahler measures,
conductor-$11$ curves and the Deninger--Beilinson framework.
Section~\ref{sec:family} proves the structural identity for $S_0$ and the
exact structural results for the family $S_k$, and shows that the mechanism
fails at conductor $53$. Section~\ref{sec:proof} contains the proof of
(C3): the closed chain
lemma, the homology class computation, the regulator synthesis.
Section~\ref{sec:cert} describes the numerical certification, including the
ball-arithmetic proof of the period ratio (\S\ref{subsec:rigour}).
Section~\ref{sec:conjectures} states the conjectures suggested by our
computations. Appendix~\ref{app:k1} applies the same method to the
conductor-$17$ case $k=1$; Appendix~\ref{app:code} lists the reproduction
code. All
scripts (Python/mpmath, PARI/GP, python-flint/Arb), raw outputs, and the
sources of this paper are available in the accompanying repository.

\section{Background}
\label{sec:background}

\subsection{Mahler measure}

For a non-zero Laurent polynomial
$P\in\C[x_1^{\pm1},\dots,x_n^{\pm1}]$, the \emph{logarithmic Mahler measure}
is the average of $\log|P|$ over the unit torus
$\T^n=\{|x_1|=\cdots=|x_n|=1\}$:
\[
  m(P)=\int_0^1\!\!\cdots\!\int_0^1
  \log\big|P\big(e^{2\pi i t_1},\dots,e^{2\pi i t_n}\big)\big|\,dt_1\cdots dt_n,
  \qquad M(P)=e^{m(P)}.
\]
In one variable Jensen's formula gives, for $P(x)=a_0\prod_{j=1}^d(x-\alpha_j)$,
\[
  m(P)=\log|a_0|+\sum_{j=1}^d\log^+|\alpha_j|,\qquad \log^+v:=\max(\log v,0).
\]
In two variables $m(P)$ is in general transcendental, and---the subject of
this paper---it repeatedly turns out to equal a \emph{rational multiple of an
elliptic $L$-value}.

For $P(x,y)=A(x)y^2+B(x)y+C(x)$ with roots $y_\pm(x)$, Jensen's formula in
$y$ reduces the computation to a one-dimensional integral:
\begin{equation}\label{eq:jensen}
  m(P)=\frac1{2\pi}\int_0^{2\pi}\Big[\log|A(e^{i\theta})|
  +\log^+|y_+(e^{i\theta})|+\log^+|y_-(e^{i\theta})|\Big]\,d\theta.
\end{equation}
All Mahler measures in this paper are computed from \eqref{eq:jensen} with
mpmath \cite{mpmath} at $60$--$300$ digits.

The prototype of the whole story is Smyth's 1981 formula \cite{Smyth}
\[
  m(1+x+y)=\frac{3\sqrt3}{4\pi}L(\chi_{-3},2)=L'(\chi_{-3},-1)=0.3230659\ldots
\]
Here the curve $1+x+y=0$ has genus $0$ and the $L$-value is Dirichlet; in
genus $1$ it is elliptic $L$-functions that appear.

\subsection{Elliptic curves of conductor \texorpdfstring{$11$}{11}}

For $E/\Q$ the $L$-function is $L(E,s)=\sum_{n\ge1}a_n n^{-s}$ with
$a_p=p+1-\#E(\mathbb{F}_p)$ at good primes; the conductor $N$ measures bad
reduction. The minimal possible conductor is $11$, and there is a single
isogeny class (LMFDB \cite{LMFDB} \texttt{11.a1--a3}), containing
\[
\begin{aligned}
  X_1(11)&:\ y^2+y=x^3-x^2\ (\texttt{11.a3}),\\
  X_0(11)&:\ y^2+y=x^3-x^2-10x-20\ (\texttt{11.a1}).
\end{aligned}
\]
By modularity, $L(E,s)$ comes from a weight-$2$ cusp form, which for
conductor $11$ has the eta-product expression
\[
  f_{11}(\tau)=\eta(\tau)^2\eta(11\tau)^2
  =q\prod_{n\ge1}(1-q^n)^2(1-q^{11n})^2,\qquad q=e^{2\pi i\tau}.
\]
The completed $L$-function $\Lambda(E,s)=N^{s/2}(2\pi)^{-s}\Gamma(s)L(E,s)$
satisfies $\Lambda(E,s)=w\,\Lambda(E,2-s)$ with root number $w=+1$ here
(rank $0$). Since $\Gamma(s)=1/s+O(1)$ and the functional equation forces
$L(E,0)=0$, one obtains the key constant identity
\[
  b_{11}:=L'(E_{11},0)=\frac{11}{4\pi^2}L(E_{11},2)
  =0.15214714172591804948622729747863\ldots
\]
Our script \texttt{code/b11.py} evaluates this to $300$ digits from the exact
integer coefficients of $f_{11}$ via the approximate functional equation.

\subsection{The Deninger--Beilinson framework and the BMZ formula}

Deninger conjectured \cite{Deninger}, and Rodr\'iguez Villegas explained
\cite{RV}, that for
\emph{tempered} polynomials $P$ (every face polynomial of the Newton polygon
cyclotomic, equivalently the symbol $\{x,y\}$ taming trivially on $P=0$) the
Jensen-reduced integral can be rewritten as a \emph{regulator integral}
\[
  \eta(x,y)=\log|x|\,d\arg y-\log|y|\,d\arg x
\]
along the intersection of the torus with the curve; the Bloch--Beilinson
conjectures then predict that such regulator pairings are rational multiples
of $L(E,2)$. Boyd's 1998 experiments \cite{Boyd98} tabulated the conjectures;
over 25 years most of them were proved (Rodr\'iguez Villegas \cite{RV} in the
CM conductors $27,32,36$, with lattice-sum reproofs by Rogers \cite{Rogers};
Mellit \cite{Mellit} and Brunault in conductor $14$; Rogers--Zudilin
\cite{RZ14,RZ15} in conductors $15,20,24$ (the conductor-$24$ formulas had
first been obtained conditionally by Lal\'in--Rogers \cite{LR07});
Lal\'in--Samart--Zudilin \cite{LSZ} in conductor $21$;
Brunault \cite{Brunault,BrunaultBSMF} in conductor $11$; see the survey
\cite{BertinLalin}).

Brunault's thesis \cite{Brunault} made Beilinson's theorem on regulators of
\emph{modular units} (rational functions with divisors supported on cusps)
fully explicit and applied it to $X_1(11)$, proving (C1) and (C2).
Mellit--Brunault--Zudilin condensed this into the directly applicable
\emph{BMZ formula} \cite{Zudilin}: for Siegel units
$g_a(\tau)=q^{NB_2(a/N)/2}\prod_{n\equiv a}(1-q^n)\prod_{n\equiv -a}(1-q^n)$,
\[
  \int_{c/N}^{i\infty}\eta(g_a,g_b)=\frac1{4\pi}L(f_{a,b;c},2),
\]
where $f_{a,b;c}$ is an explicit weight-$2$ modular form. Roughly: if $x,y$
are modular units and the integration path joins cusps, the regulator
integral is an $L$-value. This is the standard by which proof strategies are
measured.

\subsection{Why (C3) is harder}

The (C1)--(C2) proof chain is: modular units $+$ cusp-to-cusp path $+$ BMZ.
For (C3) the integration path ends at the torus-intersection point
$P_{\pi/2}=(i,e^{i\pi/4})$, which is \emph{not} a torsion point (proved
rigorously by reduction at good primes, \S\ref{sec:cert}), so the naive
boundary divisor is not cuspidal. The resolution, developed in
Section~\ref{sec:proof}, is that the correct object is not the open path but
a \emph{closed signed chain} on the full circle: closedness is a topological
property independent of whether the endpoints are torsion.

\section{The structural identity and the family \texorpdfstring{$S_k$}{Sk}}
\label{sec:family}

\subsection{The structural identity for $S_0$}

\begin{proposition}\label{prop:structural}
On $[0,\pi]$ one has $|y_-(e^{i\theta})|\le 1\le |y_+(e^{i\theta})|$
(with $\max|y_-|=1$ attained only at $\theta=0,\pi/2$). Since
$|y_+y_-|=|x^3|=1$ on the torus,
\[
  m(S_0)=\frac1\pi\int_0^\pi\log|y_+|\,d\theta
  =-\frac1\pi\int_0^\pi\log|y_-|\,d\theta=-(I_1+I_2).
\]
\end{proposition}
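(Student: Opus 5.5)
Write $x=e^{i\theta}$. The polynomial $S_0$ is quadratic in $y$ with leading coefficient $1$, and its roots satisfy $y_+y_-=x^3$ and $y_++y_-=-(x^2+1)$. Dividing by $x^{3/2}$ (with $x^{1/2}=e^{i\theta/2}$) and setting $u=y/x^{3/2}$ gives
\[
u^2+2\cos\theta\,x^{-1/2}\,u+1=0 .
\]
The roots $u_\pm$ then satisfy $u_+u_-=1$ and $u_++u_-=-2\cos\theta\,e^{-i\theta/2}$, and we have $|u_\pm|=|y_\pm|$. So the question of whether $|y_\pm|=1$ becomes the question of whether $u_\pm$ lies on the unit circle. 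Because $u_+u_-=1$, a root with $|u|=1$ comes with its partner $1/u=\bar u$. In that case the sum $u+\bar u=2\operatorname{Re}u$ is real, and this forces $\cos\theta\,e^{-i\theta/2}$ to be real.

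The plan is to use this to locate exactly where the torus meets the curve, and then to conclude by continuity.

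\textbf{Step 1: where $|y_\pm|=1$.} The quantity $\cos\theta\,e^{-i\theta/2}$ is real only when $\cos\theta=0$ or $\sin(\theta/2)=0$. On $[0,\pi]$ this means $\theta\in\{0,\pi/2,\pi\}$.

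\textbf{Step 2: the endpoints.} At $\theta=0$ the equation becomes $(u+1)^2=0$, a double root with $|y|=1$. At $\theta=\pi/2$ it becomes $u^2+1=0$, so $|u|=1$. At $\theta=\pi$ we have $x=-1$ and $S_0=y^2+2y-1$, with roots $-1\pm\sqrt2$. These have moduli $\sqrt2-1<1<\sqrt2+1$, so $\theta=\pi$ is not a torus point and $\max|y_-|=1$ is attained only at $0$ and $\pi/2$.

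\textbf{Step 3: the ordering on the open intervals.} On each of $(0,\pi/2)$ and $(\pi/2,\pi]$ the roots are distinct and neither has modulus $1$. Since $|y_+y_-|=1$, exactly one root lies strictly inside the unit disc. The unordered pair of moduli varies continuously, so on each interval the labels satisfy $|y_-|<1<|y_+|$ throughout; this agrees with the paper's definition of $y_-$ as the smaller-modulus branch. A single sample point per interval confirms this, for instance $\theta=\pi$ from Step 2 and a convenient $\theta=\pi/4$, which is routine. Extending by continuity at the nodes gives $|y_-|\le1\le|y_+|$ on all of $[0,\pi]$.

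\textbf{Step 4: the integral identities.} We have $\log^+|y_+|=\log|y_+|=-\log|y_-|$ and $\log^+|y_-|=0$ on $[0,\pi]$, while $\log|A|=0$ since $A=1$. Complex conjugation $\theta\mapsto-\theta$ maps the root set to its conjugate, so the integrand in \eqref{eq:jensen} is even in $\theta$. Then \eqref{eq:jensen} halves to
\[
m(S_0)=\frac1\pi\int_0^\pi\log|y_+|\,d\theta=-\frac1\pi\int_0^\pi\log|y_-|\,d\theta=-(I_1+I_2).
\]

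The only delicate point is Step 1: the reality argument must exclude every other crossing exactly, and that is what makes the continuity argument in Step 3 valid. A double root could in principle let $|y_\pm|$ swap without either passing through modulus $1$. This cannot happen here: the discriminant in $u$ is $4(\cos^2\theta\,x^{-1}-1)$, which vanishes only where $\cos^2\theta=e^{i\theta}$, and on $[0,\pi]$ that happens only at $\theta=0$. Because the argument is exact, the ball-arithmetic check becomes only a confirmation.
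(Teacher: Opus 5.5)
Your proof is correct, and it takes a different route from the paper. The paper proves the modulus ordering by computer: an adaptive bisection of $[0,\pi]$ in ball arithmetic (\texttt{branch\_certify.py}) certifies on each ball that the discriminant is nonzero and that $|y_{\mathrm{small}}|<1<|y_{\mathrm{big}}|$, with special balls at $0$ and $\pi/2$. The Jensen/evenness step is then the same as your Step~4.

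You replace the certification by an exact computation. Your reduction to $u^2+2\cos\theta\,x^{-1/2}u+1=0$, and the observation that a unimodular root forces $\cos\theta\,e^{-i\theta/2}\in\R$, is precisely the $k=0$ case of the paper's own exact Proposition~\ref{prop:torus} (equation \eqref{eq:torus}, with $u=e^{i\psi}$). So your route shows that this proposition needs no computer assistance at all. The paper's route mainly buys uniformity with its other ball-arithmetic certificates, namely the endpoint branch assignments of Lemma~\ref{lem:closed}, which come from the same script.

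Your write-up can also be shortened:
\begin{itemize}
\item Since $y_-$ is by definition the smaller-modulus root and $|y_+||y_-|=1$, the non-strict inequality $|y_-|\le1\le|y_+|$ holds pointwise. No continuity or sample-point argument is needed. The only real content is strictness off $\{0,\pi/2\}$, and your Step~1 supplies it.
\item The double-root worry is moot, because a double root $y$ satisfies $|y|^2=|x|^3=1$ and is automatically a torus point.
\end{itemize}

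One small slip: $\theta=\pi$ satisfies neither $\cos\theta=0$ nor $\sin(\theta/2)=0$. The candidate set from Step~1 on $[0,\pi]$ is therefore $\{0,\pi/2\}$. Including $\pi$ is harmless, since you check and exclude it in Step~2.
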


\begin{proof}
The modulus ordering on $[0,\pi]$ is proved by interval arithmetic:
adaptive bisection with ball arithmetic certifies that $\log|y_-|$ is
negative on $(0,\pi)$ except at the fold $\theta=\pi/2$ and the endpoint
$\theta=0$, where it vanishes (\texttt{code/branch\_certify.py},
\S\ref{sec:cert}; this proposition is computer-assisted in the sense of
\S\ref{subsec:rigour}). Since
$y_+y_-=x^3$ has modulus $1$ on the torus, $\log|y_+|=-\log|y_-|\ge 0$
pointwise, so $\log^+|y_+|=\log|y_+|$ and $\log^+|y_-|=0$ in Jensen's
formula \eqref{eq:jensen} (with $A(x)=1$); evenness in $\theta$ gives the
factor $1/\pi$, and $-I_1-I_2$ follows from $\log|y_+|=-\log|y_-|$.
\end{proof}

\begin{corollary}
Conjecture \emph{(C3)} is equivalent to the individual values
\[
  I_1=\frac{b_{11}-m(S_0)}{2},\qquad I_2=-\frac{b_{11}+m(S_0)}{2}.
\]
\end{corollary}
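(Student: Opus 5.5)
The corollary is pure linear algebra once Proposition~\ref{prop:structural} is available, so I will deduce it from the exact identity $I_1+I_2=-m(S_0)$ together with the definition $I_{\mathrm{split}}=I_1-I_2$ in \textup{(C3)}. Since (C3) in the form proved here (Theorem~\ref{thm:C3}) reads $I_{\mathrm{split}}=b_{11}$, the two unknowns $I_1,I_2$ are governed by the linear system
\[
  I_1-I_2=b_{11},\qquad I_1+I_2=-m(S_0).
\]

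For the forward direction, assume $I_1-I_2=b_{11}$. Adding the two equations and halving gives $I_1=(b_{11}-m(S_0))/2$. Subtracting the first from the second and halving gives $I_2=-(b_{11}+m(S_0))/2$.

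For the converse, assume these individual values. Subtracting them gives $I_1-I_2=\tfrac12\bigl(b_{11}-m(S_0)+b_{11}+m(S_0)\bigr)=b_{11}$, which is (C3).

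The only point needing care is that the relation $I_1+I_2=-m(S_0)$ is unconditional. It comes from Proposition~\ref{prop:structural}, so it does not presuppose (C3). With that granted, the system is invertible (its determinant is $-2\neq0$), so the two formulations are equivalent.

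I also note a sign convention. The corollary is stated with $+b_{11}$, matching the paper's convention for $y_-$ that makes $I_{\mathrm{split}}$ positive. If one used the $-b_{11}$ reading of (C3), the same argument would go through with $b_{11}$ replaced by $-b_{11}$ throughout. There is no real obstacle here beyond this bookkeeping.
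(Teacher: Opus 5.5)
Your proof is correct and matches the paper's approach: the paper states the corollary as an immediate consequence of Proposition~\ref{prop:structural}, that is, of the identity $I_1+I_2=-m(S_0)$, which does not presuppose (C3), combined with $I_{\mathrm{split}}=I_1-I_2$. Solving that invertible $2\times 2$ linear system is all the argument requires, and your handling of the $\pm$ sign convention is accurate.
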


\begin{remark}
PSLQ searches on $(m(S_0),b_{11})$ with coefficient bound $10^8$, and on
$\{m(S_0),b_{11},\log2,\log3,\mathrm{Catalan},m(1+x+y)\}$ with bound
$10^{10}$, return no relation, supporting the expectation that $m(S_0)$
itself requires elliptic dilogarithms rather than elementary constants.
\end{remark}

\subsection{The family \texorpdfstring{$S_k$}{Sk}: exact structure and
numerical observations}
\label{subsec:familythm}

Consider
\[
  S_k=y^2+(x^2+kx+1)y+x^3,
\]
with associated elliptic curve $E_k$.

\begin{proposition}[Torus intersections, exact]\label{prop:torus}
For \emph{real} $k$, write $x=e^{i\theta}$, $\theta\in[-\pi,\pi]$. A point
$(x,y)$ of the torus $|x|=|y|=1$ lies on $S_k=0$ if and only if $\theta$
falls in one of the following two cases:
\begin{enumerate}
  \item[\textup{(i)}] $\theta=0$ and $|k+2|\le2$;
  \item[\textup{(ii)}] $k+2\cos\theta=0$ \textup{(}solvable iff
  $|k|\le2$\textup{)}.
\end{enumerate}
Consequently:
\begin{enumerate}
  \item the torus meets the curve if and only if $k\in[-4,2]$;
  \item for $-2\le k\le2$ the \emph{fold points}
  $\theta=\pm\arccos(-k/2)$ occur, where $B=x^2+kx+1=0$ and $y^2=-x^3$ has
  both roots of modulus $1$;
  \item for $-4\le k\le0$ an additional \emph{branch-exchange intersection}
  at $\theta=0$ occurs, where $S_k(1,y)=y^2+(k+2)y+1$ has both roots on the
  unit circle --- distinct for $-4<k<0$, a double root (tangency) at $k=-4$
  ($y=1$) and at $k=0$ ($y=-1$);
  \item at $k=2$ the folds coalesce with the boundary $\theta=\pi$, where
  $S_2(-1,y)=y^2-1$.
\end{enumerate}
This matches Samart's observation $K\cap\R=[-4,2]$ for the
torus-intersection parameter interval \cite{Samart2023}.
\end{proposition}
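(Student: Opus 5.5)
The plan is to normalize the quadratic so that the torus condition becomes a statement about a palindromic quadratic with unimodular roots. On $|x|=1$ with $x=e^{i\theta}$ and real $k$, factor
\[
  B(x)=x^2+kx+1=x\,(x+k+x^{-1})=x\,c,\qquad c:=k+2\cos\theta\in\R,
\]
so that $S_k(x,y)=y^2+c\,x\,y+x^3$. Next I fix $s=e^{3i\theta/2}$ with $s^2=x^3$ and substitute $y=s\,u$, which gives $|y|=|u|$. Dividing by $s^2$ yields
\[
  u^2+c\,w\,u+1=0,\qquad w:=x/s=e^{-i\theta/2},\quad |w|=1,\quad w^2=x^{-1}.
\]
The product of the two roots is $1$, so one root lies on the unit circle if and only if both do. The question is therefore exactly when this quadratic has a root $u$ with $|u|=1$.

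For the necessity direction, suppose $|u|=1$. Then $1/u=\bar u$, and the sum of the roots is $u+\bar u=2\operatorname{Re}u\in\R$, which equals $-c\,w$. There are two possibilities.
\begin{itemize}
  \item If $c=0$, we are in case (ii).
  \item If $c\neq0$, then $w$ must be real, so $w=\pm1$ and $x^{-1}=w^2=1$, that is $\theta=0$. Then the quadratic is $u^2\pm(k+2)u+1$ with real coefficients and root product $1$. Its roots are unimodular exactly when the discriminant is $\le0$ (complex-conjugate pair) or the roots are a real double root $\pm1$. Both situations amount to $|k+2|\le2$, which is case (i).
\end{itemize}

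Sufficiency is the same computation read backwards. When $c=0$ the quadratic is $u^2+1=0$, so $u=\pm i$ and $y^2=-x^3$ has unimodular roots. When $\theta=0$ and $|k+2|\le2$, the real quadratic $y^2+(k+2)y+1$ has roots $\bar u,u$ or a double root $\pm1$. Solvability of $k+2\cos\theta=0$ is clearly equivalent to $|k|\le2$.

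The enumerated consequences follow by bookkeeping.
\begin{enumerate}
  \item The union $[-4,0]\cup[-2,2]$ equals $[-4,2]$.
  \item The fold points are $\theta=\pm\arccos(-k/2)$, where $B=0$.
  \item At $\theta=0$ one has $S_k(1,y)=y^2+(k+2)y+1$. Its discriminant $(k+2)^2-4=k(k+4)$ vanishes exactly at $k=0$ (double root $y=-1$) and $k=-4$ (double root $y=1$). For $-4<k<0$ the roots are distinct conjugates.
  \item At $k=2$ the folds sit at $\arccos(-1)=\pi$, and $S_2(-1,y)=y^2-1$.
\end{enumerate}

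The only delicate point is the choice of square root $s$, since $w$ depends on it. However, $w$ is only determined up to sign, and the argument uses only $|w|=1$ and $w^2=x^{-1}$, so the choice is harmless. Apart from that I expect no obstacle. The one place needing care is the claim "$cw$ real with $c\neq0$ forces $w=\pm1$": here I use that $w=e^{-i\theta/2}$ with $\theta\in[-\pi,\pi]$, so $w$ real forces $\theta=0$ rather than $\theta=\pm2\pi$.
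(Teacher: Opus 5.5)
Your proof is correct and is essentially the paper's argument. Both factor $B=x(k+2\cos\theta)$ and normalize $y$ by $e^{3i\theta/2}$, so that the torus condition becomes the realness of $(k+2\cos\theta)e^{\pm i\theta/2}$; the paper writes this as $2\cos\psi\,e^{i\theta/2}=-(k+2\cos\theta)$ and compares real and imaginary parts, where you use the root sum $u+\bar u$ of the palindromic quadratic, and both then split into $k+2\cos\theta=0$ versus $\theta=0$ with $|k+2|\le2$, with your bookkeeping of the consequences (including the discriminant $k(k+4)$ at $\theta=0$) matching the paper's.
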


\begin{proof}
Put $y=e^{i\phi}$. Since $x+x^{-1}=2\cos\theta$, one has
$B=x^2+kx+1=x(k+2\cos\theta)$. Dividing
$e^{2i\phi}+Be^{i\phi}+e^{3i\theta}=0$ by $e^{i\phi}$ and using
$e^{i\phi}+e^{i(3\theta-\phi)}=2\cos\psi\,e^{3i\theta/2}$ with
$\psi:=\phi-3\theta/2\in\R$ gives
\begin{equation}\label{eq:torus}
  2\cos\psi\,e^{i\theta/2}=-(k+2\cos\theta),
\end{equation}
a real number. Comparing imaginary and real parts,
\[
  \cos\psi\,\sin(\theta/2)=0,
  \qquad
  2\cos\psi\,\cos(\theta/2)=-(k+2\cos\theta).
\]
The first equation gives two cases. (i) $\sin(\theta/2)=0$, i.e.\
$\theta=0$ on $[-\pi,\pi]$: the second equation reads
$2\cos\psi=-(k+2)$, solvable iff $|k+2|\le2$, and then
$y=e^{i\phi}$ with $\cos\phi=-(k+2)/2$ is indeed a root of
$S_k(1,y)=y^2+(k+2)y+1$. (ii) $\cos\psi=0$: the second equation forces
$k+2\cos\theta=0$, solvable iff $|k|\le2$; conversely, if
$k+2\cos\theta=0$ then $B=0$ and $S_k(e^{i\theta},y)=y^2+e^{3i\theta}$
has the two roots $y=\pm e^{i(3\theta+\pi)/2}$, both of modulus $1$ ---
these are the fold points, with $\psi=\pm\pi/2$ realising
\eqref{eq:torus}. The union of the two parameter ranges is
$[-4,0]\cup[-2,2]=[-4,2]$, giving (1); items (2)--(4) are read off
directly (at $k=2$ the fold $\arccos(-1)=\pi$; at $k=-2$ the fold
$\arccos(1)=0$ coincides with the case-(i) point).
\end{proof}

The conductors, confirmed by PARI \cite{PARI}
\texttt{ellfromeqn} and \texttt{ellglobalred}, are
\[
  k=-3,-2,-1,0,1,2,3\ \longmapsto\ N=83,\ 91=7\cdot13,\ 53,\ 11,\ 17,\ 37,\ 79.
\]
Denote by $\tilde n(k)$ the modified Mahler measure along the signed chain
(when no fold points exist, $\tilde n(k)=m(S_k)$). The numerical
observations ($70$ digits; PARI/GP cross-checks), with their evidence
labels, are summarised in Table~\ref{tab:family}.

\begin{table}[ht]
\centering\footnotesize
\begin{tabular}{@{}cclcll@{}}
\toprule
$k$ & $N$ & root number & fold $c/\pi$ & result & status (digits)\\
\midrule
$-3$ & $83$ & $-1$ & none & ratio $0.8529175\ldots$ & none found (PSLQ)\\
$-2$ & $91$ & $-1$ & none & ratio $0.6339454\ldots$ & none found (PSLQ)\\
$-1$ & $53$ & $-1$ & $1/3$ & ratio $0.7392026\ldots$ & mech.\ fails
  (Prop.~\ref{thm:k53})\\
$0$ & $11$ & $+1$ & $1/2$ & $\tilde n=-b_{11}$ & \textbf{proved}, (C3)\\
$1$ & $17$ & $+1$ & $2/3$ & $\tilde n=+b_{17}$ & \textbf{proved} (cond.),
  Thm.~\ref{thm:k1}\\
$2$ & $37$ & $-1$ & none & $m=\tilde n=2\,b_{37}$ & numerical, $70$ digits\\
$3$ & $79$ & $-1$ & none & $m=\tilde n=b_{79}$ & numerical, $70$ digits\\
$-4$ & $37$ & $-1$ & none (tangent) & $m=\tfrac72\,b_{37}$
  & numerical, pred., $25$ digits\\
$-5$ & $359$ & $-1$ & none & $m=\tfrac14\,b_{359}$
  & numerical, pred., $25$ digits\\
$-6$ & $997$ & $-1$ & none & $m=\tfrac18\,b_{997}$
  & numerical, pred., $25$ digits\\
\bottomrule
\end{tabular}
\caption{The family $S_k$: conductors and Boyd-type evaluations, with
evidence labels. ``Numerical'' rows are conjectural identities
(Conjecture~\ref{conj:family}); ``none found'' means that PSLQ searches
with coefficient bound $10^8$ found no rational relation.
Here $b_N:=|L'(E_k,0)|\ge0$ and $\tilde n(k)$ is Samart's signed
split-integral; for $k=0$ one has $\tilde n(0)=-I_{\mathrm{split}}$ in the
notation of \textup{(C3)}, so the proved identity appears with a minus sign.}
\label{tab:family}
\end{table}

The torsion data (PARI \texttt{elltors}/\texttt{ellorder}) reveal the true
dichotomy (Table~\ref{tab:torsion}):

\begin{table}[ht]
\centering\footnotesize
\begin{tabular}{@{}ccccc@{}}
\toprule
$k$ & $N$ & torsion & $\operatorname{ord}(0,0)$ & Boyd-type identity\\
\midrule
$0$ & $11$ & $\Z/5\Z$ & $5$ & proved (modular units, (C3))\\
$1$ & $17$ & $\Z/4\Z$ & $4$ & proved ($\tilde n=b_{17}$, Thm.~\ref{thm:k1})\\
$-3,-2,-1,2,3$ & $83,91,53,37,79$ & trivial & $\infty$ & not modular units
  (Prop.~\ref{thm:family})\\
\bottomrule
\end{tabular}
\caption{Torsion versus Boyd-type identities (exact data).}
\label{tab:torsion}
\end{table}

\begin{proposition}[Modular units in the family, exact]\label{thm:family}
\leavevmode
\begin{enumerate}
  \item For $k=0$ ($E=X_1(11)$, $E(\Q)_{\mathrm{tors}}=\Z/5\Z$) the divisors
  of $x,y$ are supported on rational torsion points, and these are exactly
  the rational cusps under the modular identification (\S\ref{sec:proof}),
  so $x,y$ are modular units; a Boyd-type identity is \emph{proved}
  in this case \emph{(C3)}.
  For $k=1$ ($E(\Q)_{\mathrm{tors}}=\Z/4\Z$) the divisors of $x,y$ are
  supported on rational torsion points and the symbol $\{x,y\}$ is tempered
  (its Newton face polynomials are cyclotomic, Appendix~\ref{app:k1}), so
  $\{x,y\}\in K_2(E)\otimes\Q$; this is the only $K$-theoretic input used
  by the conductor-$17$ proof (Theorem~\ref{thm:k1}).  We do \emph{not}
  assert that $x,y$ are modular units for $k=1$: on the natural modular
  model $X_0(17)$, which has only two cusps, a divisor supported on all
  four rational torsion points cannot be cuspidal, and no other modular
  identification is provided.
  \item For $k\in\{-3,-2,-1,2,3\}$ the rational torsion of $E_k$ is trivial
  (PARI \texttt{elltors}, exact). The point $(0,0)\neq O$ is then
  non-torsion, and since each $E_k$ is the strong Weil curve of its
  conductor, the cusps of $X_0(N)$ map to torsion points of $E_k$
  (Manin--Drinfeld), hence to $O$; a modular unit on $E_k$ would have
  divisor supported on $\{O\}$ alone, while $x=0$ meets $S_k$ at
  $(0,0)$ and $(0,-1)$. Hence $x,y$ are \emph{not} modular units and the
  modular-unit regulator mechanism does not apply to them. For $k=-1$ the
  closed-cycle ingredient fails as well
  (Proposition~\ref{thm:k53}).
\end{enumerate}
\end{proposition}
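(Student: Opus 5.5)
The plan is to compute the divisors of $x$ and $y$ on $E_k$ explicitly from the equation $S_k=y^2+(x^2+kx+1)y+x^3$, and then compare their supports with the rational torsion and the images of the cusps.

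\emph{Divisors.} Over $x=0$ the fibre is $y^2+y=0$, giving the affine points $(0,0)$ and $(0,-1)$. At $x=\infty$ the Newton polygon of $S_k$ shows that $y\sim -x^2$ on one branch and $y\sim -x$ on the other. Hence $x$ has poles at the points at infinity, and $\divisor(x)$ is supported on $\{(0,0),(0,-1),\infty_1,\infty_2\}$. Over $y=0$ we get $x^3=0$, so the only zero of $y$ is at $(0,0)$ (with multiplicity read off from the local parameter), and the poles of $y$ lie at infinity. The same four points therefore carry both divisors. Exact rational arithmetic on a Weierstrass model obtained from \texttt{ellfromeqn} identifies these points in $E_k(\Q)$. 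Choosing one point at infinity as the origin $O$, I would verify that $(0,0)$, $(0,-1)$ and the other point at infinity are multiples of a single point $P=(0,0)$.

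\emph{Part (1).} For $k=0$ this point $P$ has order $5$, so the four support points lie in $E(\Q)_{\mathrm{tors}}=\Z/5\Z$. On $X_1(11)$ the five rational torsion points are exactly the five rational cusps. I would see this by Manin–Drinfeld together with a count, or by citing the explicit cusp identification used in \S\ref{sec:proof}. It follows that $x$ and $y$ are modular units. For $k=1$ the same computation gives order $4$. Temperedness follows from the face polynomials of the Newton polygon: these are $y^2+y$, $y+x^3$ and similar binomials or trinomials with roots of unity as zeros, which I would check directly. The non-assertion about $X_0(17)$ only needs the remark that $X_0(17)$ has two cusps, while the divisor is supported on four distinct points.

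\emph{Part (2).} For $k\in\{-3,-2,-1,2,3\}$, \texttt{elltors} gives trivial torsion, so $P=(0,0)\ne O$ has infinite order. By Manin–Drinfeld, the image of any cusp under the modular parametrization $X_0(N)\to E_k$ is torsion, hence equal to $O$. I would also confirm via LMFDB that $E_k$ is the optimal curve; otherwise one uses that cusps map to torsion under any modular parametrization. So any modular unit would have divisor supported on $\{O\}$ alone, and must then be constant, since a nonzero multiple of a single point is not principal. But $x$ has a zero at $(0,0)\neq O$, so $x$ is not a modular unit, and similarly $y$ is not.

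The main obstacle is the precise bookkeeping at infinity, namely the places and multiplicities there and their identification with torsion multiples of $P$. The modular-unit conclusion for $k=0$ also depends on the identification of torsion with cusps.
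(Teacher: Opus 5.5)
Your proposal is correct and is essentially the paper's argument: explicit divisors of $x,y$ supported on rational torsion points; for $k=0$, identification of these points with the rational cusps of $X_1(11)$; cyclotomic face polynomials for $k=1$; and Manin--Drinfeld plus trivial torsion forcing any modular unit to be constant for $k\in\{-3,-2,-1,2,3\}$. For the cusp identification, your counting argument (five rational cusps against the five points of the rank-$0$ group $E(\Q)\cong\Z/5\Z$, transported by a $\Q$-isomorphism) is a valid and lighter substitute for Brunault's explicit cusp table.

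One bookkeeping caveat concerns the choice of origin. Your ``multiples of $(0,0)$'' claim holds if $O$ is the point at infinity where $y$ has its double pole. Then $\divisor(y)$ and $\divisor(x)$ force $(0,-1)=2P$, and the other point at infinity equals $3P$. With the other choice of origin, $(0,0)$ has order $2$, not $4$, for $k=1$. This does not affect the conclusions, because $E_0(\Q)$ and $E_1(\Q)$ are finite, so every rational point is torsion for any rational origin.
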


\begin{remark}[The structural dichotomy, as numerical evidence]
Together with Proposition~\ref{prop:torus} this yields a clean
\emph{observed} dichotomy (Tables~\ref{tab:family} and
\ref{tab:torsion}): for integer $k\notin(-4,2)$ (no genuine torus
intersection) the classical Deninger mechanism is expected to apply
directly, and identities $m(S_k)=r_k\,|L'(E_k,0)|$ with small rational
$r_k$ are observed numerically (Conjecture~\ref{conj:family}); for integer
$k$ with $-4<k<2$ the only case where $x,y$ are modular units is $k=0$,
while $k=1$ has a tempered symbol with torsion-supported divisors (but no
modular-unit structure, Proposition~\ref{thm:family}), and for
$k=-1,-2,-3$ (trivial torsion,
branch-exchange intersections at $\theta=0$) no rational relation is found
within the PSLQ bound $10^8$. We record this as evidence for
Conjecture~\ref{conj:family}, not as a theorem.
\end{remark}

\begin{remark}
The $k=1$ row ($\tilde n(1)=b_{17}$) was first found as an independent
high-precision confirmation of the conductor-$17$ analogue stated by Samart;
it is proved as Theorem~\ref{thm:k1} (Appendix~\ref{app:k1}).
\end{remark}

\subsection{The conductor-\texorpdfstring{$53$}{53} case: failure of the
mechanism}
\label{subsec:k53}

Samart \cite[\S 4]{Samart2023} mentions a conductor-$53$ analogue (for
$k=-1$) in a single sentence, with no formula, precision, or reference.
Since the exact statement to be tested is not on record, we cannot
adjudicate it directly; what we can and do test is whether the
\emph{mechanism} of this paper --- a closed anti-invariant torus cycle
paired against a modular-unit symbol --- extends to $k=-1$. It does not,
for three independent reasons.

\begin{proposition}[Failure of the closed-cycle/modular-unit mechanism at
$k=-1$]\label{thm:k53}
For $S_{-1}$ (curve \emph{\texttt{53.a1}}):
\begin{enumerate}
  \item \emph{Topological obstruction (exact).} On $|x|=1$ the torus intersections
  are $\theta=0$ (where the two branches \emph{exchange}) and
  $\theta=\pm\pi/3$. The space of closed anti-invariant chains with
  breakpoints at the intersections and big/small branch choices is
  $\Z\cdot(1,1,1,1)$: the boundary matrix of the anti-invariant
  symmetrization is a $\pm1/0$ integer matrix whose kernel over $\Q$ is
  exactly one-dimensional, computed by exact rational linear algebra
  (\texttt{code/k53\_smith.py}), and the generator $(1,1,1,1)$ has period
  $0$ --- in fact exactly, not numerically: $(1,1,1,1)$ is the sum of the
  two sheets over the full circle, and the two branches have
  $u=2y+B=\pm\sqrt{B^2-4x^3}$, so $1/u_++1/u_-\equiv0$ pointwise and
  $\sum_{\mathrm{branches}}dx/u$ vanishes identically (the continuous-root
  loop, which closes only after \emph{two} turns, vanishes by the same
  cancellation). Hence the natural chain space contains
  \emph{no non-trivial closed anti-invariant torus cycle}: the first
  ingredient of the (C3) mechanism does not exist at $k=-1$.
  \item \emph{Algebraic obstruction.} The functions $x,y$ are not modular
  units. Indeed
  \[
    \divisor(x)=[P]+[-P]-2[O],\qquad \divisor(y)=3[P]-3[O],
  \]
  with $P=(0,0)$ a Mordell--Weil \emph{generator}: \texttt{53.a1} has
  trivial rational torsion and rank $1$ (PARI \texttt{ellorder}$(P)=0$,
  \texttt{ellidentify}). The curve is the strong Weil curve of conductor
  $53$; the two cusps of $X_0(53)$ map to torsion points of $E$
  (Manin--Drinfeld), hence to $O$ since $E(\Q)_{\mathrm{tors}}$ is trivial,
  so a modular unit on $E$ would have divisor supported on $\{O\}$ alone ---
  while the support of $\divisor(x)$ and $\divisor(y)$ contains the
  non-torsion point $P$. The Beilinson--Brunault mechanism therefore has no
  symbol to apply to.
  \item \emph{Numerical observation.} For the natural candidate (the
  half-loop $L_1$, an open chain), the period ratio $0.5492906\ldots$ with
  $w_{\mathrm{anti}}$ shows no integrality at the computed precision, and
  the regulator integral over $2\pi b_{53}$ equals $0.7392026\ldots$; PSLQ
  searches with coefficient bound $10^8$ find no rational relation.
\end{enumerate}
None of this excludes identities arrived at by other means; it rules out the
specific mechanism that proves \emph{(C3)}. In the absence of a precise
proposed formula, we leave the conductor-$53$ case open.
\end{proposition}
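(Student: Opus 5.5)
The proof splits along the three items of the proposition, each settled by an independent argument.

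\emph{Item (1).} First I locate the torus intersections from Proposition~\ref{prop:torus} with $k=-1$: case (i) gives $\theta=0$ with $S_{-1}(1,y)=y^2+y+1$, whose roots are the two primitive cube roots of unity. These are distinct, and as $\theta$ crosses $0$ the branches exchange modulus order. Case (ii) gives $2\cos\theta=1$, so $\theta=\pm\pi/3$, the fold points. This cuts the circle $|x|=1$ into four arcs: $(-\pi,-\pi/3)$, $(-\pi/3,0)$, $(0,\pi/3)$ and $(\pi/3,\pi)$, each carrying a big and a small sheet. The candidate chains are integer combinations of arc-lifts. I impose two conditions: the boundary must vanish at each breakpoint, matching endpoints via the branch exchange at $0$ and the coalescence at $\pm\pi/3$; and the chain must be anti-invariant under complex conjugation $\theta\mapsto-\theta$. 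This gives an explicit $\pm1/0$ integer matrix, and I would compute its kernel over $\Q$ by exact rational row reduction, to be checked by the script. The kernel should be spanned by $(1,1,1,1)$, which is the sum of both sheets over the whole circle. Its period vanishes identically, because $\omega=dx/(2y+B)$ and $u_\pm=\pm\sqrt{B^2-4x^3}$, so the two sheets cancel pointwise. The same cancellation applies to the double-turn continuous-root loop. The delicate part of this step is bookkeeping: getting the orientation conventions and the branch labels at $\theta=0$ right in the boundary matrix.

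\emph{Item (2).} I compute the divisors directly. On $x=0$ we have $y(y+1)=0$, giving the points $P=(0,0)$ and $(0,-1)$. Using the Weierstrass model from \texttt{ellfromeqn}, I would identify $(0,-1)$ with $-P$ and show that $x$ has a double pole at $O$, and that $y$ vanishes to order $3$ at $P$ with a triple pole at $O$, via local parameters in the model $y^2+(x^2-x+1)y+x^3$. These give the stated divisors. Next, $E(\Q)_{\mathrm{tors}}=0$ and $P$ has infinite order (from PARI, with nontriviality checkable via reduction modulo good primes). The Manin--Drinfeld argument is then the one already used in Proposition~\ref{thm:family}(2): cusps map to torsion, hence to $O$, so a modular-unit divisor must be supported on $\{O\}$, which contradicts $P\in\operatorname{supp}$.

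\emph{Item (3).} This item is purely numerical. I evaluate $\int_{L_1}\omega/w_{\mathrm{anti}}$ and $\int_{L_1}\eta/(2\pi b_{53})$ at high precision and run PSLQ with coefficient bound $10^8$. No proof content is claimed beyond the reported observation.

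The main obstacle is item (1). An error in the boundary matrix could enlarge the kernel, so I would cross-check the matrix against a direct numerical monodromy computation of the branches around $\theta=0$.
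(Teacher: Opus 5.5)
Your treatment of items (1) and (3), and the Manin--Drinfeld step of (2), follows the paper's own argument. The step that would fail is the divisor verification in (2).

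Local parameters on $y^2+(x^2-x+1)y+x^3=0$ do not give a double pole of $x$ and a triple pole of $y$ at a single point. The projective closure $Y^2Z+X^2Y-XYZ+YZ^2+X^3=0$ meets $Z=0$ in $X^2(X+Y)=0$, i.e.\ tangentially at $O_1=[0:1:0]$ and transversally at $Q_\infty=[1:-1:0]$. It meets $Y=0$ only at $(0,0)$, with multiplicity $3$. Hence
\[
  \divisor(x)=[(0,0)]+[(0,-1)]-[O_1]-[Q_\infty],\qquad
  \divisor(y)=3[(0,0)]-2[O_1]-[Q_\infty],
\]
which is exactly the shape of the $k=0$ formulas.

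In fact no model can yield the displayed $\divisor(y)=3[P]-3[O]$. By Abel's theorem it forces $3P=O$, contradicting the infinite order of $P$ that you use one sentence later. For the Weierstrass coordinates of \texttt{53.a1}, $y^2+xy+y=x^3-x^2$, one gets $\divisor(y_W)=2[P]+[-2P]-3[O]$ instead. Similarly, $(0,-1)\neq-(0,0)$ with either point at infinity as origin. With origin $Q_\infty$ the two relations give $(0,0)=2\cdot(0,-1)$, so even ``$(0,0)$ is a generator'' depends on the normalization.

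So you cannot write ``these give the stated divisors''; you should have caught the Abel inconsistency. Prove instead what the conclusion actually needs. The supports of $\divisor(x)$ and $\divisor(y)$ consist of four, resp.\ three, distinct $\Q$-rational points. Since $E(\Q)_{\mathrm{tors}}=0$, at most one rational point (the one corresponding to $O$) is torsion. So each support contains points of infinite order, which are not images of cusps, since those are torsion by Manin--Drinfeld. Hence neither $x$ nor $y$ is a modular unit.

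In (1), the word ``coalescence'' at $\pm\pi/3$ is wrong, and taken literally it breaks the boundary matrix. At $\theta=\pm\pi/3$ one has $B=0$ and $y^2=-x^3=1$, so the roots are $y=\pm1$. They are distinct, so the folds are modulus exchanges, just like $\theta=0$. Indeed $B^2-4x^3=x^2\bigl((2\cos\theta-1)^2-4x\bigr)\neq0$ on all of $|x|=1$, so for $k=-1$ no arc-ends coalesce anywhere. This is exactly the difference from $k=0$, where $\theta=0$ is a double root and imposes no closedness condition.

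Let $a_1,a_2$ be the coefficients of the big/small arcs on $(0,\pi/3)$ and $a_3,a_4$ those on $(\pi/3,\pi)$, mirrored on the lower half with increasing $\theta$. Closedness gives:
\begin{itemize}
  \item $a_1=a_2$ at $\theta=0$;
  \item $a_1=a_4$ and $a_2=a_3$ at $\pm\pi/3$;
  \item nothing at $\theta=\pi$, where $y^2+3y-1$ has real roots.
\end{itemize}
The kernel is therefore $\Z(1,1,1,1)$. Treating the folds as coalescences would replace the two fold conditions by $a_1+a_2=a_3+a_4$ and yield a rank-$2$ kernel.
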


\section{Proof of (C3)}
\label{sec:proof}

Throughout, $E: S_0=0$ with invariant differential $\omega=dx/u$,
$u=2y+x^2+1$ (quartic model $u^2=x^4-4x^3+2x^2+1$; the model constant
$\kappa=1$ is proved exactly in \S\ref{sec:cert}).

\subsection{Modular units}

The quartic model's invariants give $j=-2^{12}/11=j(X_1(11))$. Exact group-law
computation (\texttt{code/torsion.py}, exact rational arithmetic over $\Q$,
$\Q(\sqrt2)$, $\Q(\zeta_8)$, not numerical) on the quartic model, with its
point at infinity as neutral element, for $A=(0,0)$ on $S_0$ (the point
$(x,u)=(0,1)$ in quartic coordinates):
\[
  2A=(0,-1),\qquad 4A=(1,0)=-A\ \Longrightarrow\ 5A=O.
\]
The $S_0$-model's projective closure has \emph{two} points at infinity,
$O_1=[0:1:0]$ (the image of the quartic neutral element) and
$Q_\infty=[1:-1:0]$ (which the birational identification with $X_1(11)$
below takes to the origin $O$); on this model
\[
  \divisor(x)=[(0,0)]+[(0,-1)]-[O_1]-[Q_\infty],\qquad
  \divisor(y)=3[(0,0)]-2[O_1]-[Q_\infty]
\]
are supported on $5$-torsion points. These are exactly the rational cusps
of $X_1(11)$: the birational identification of $S_0=0$ with
$E:y^2+y=x^3-x^2=X_1(11)$ is exact (Riemann--Roch computation composed
with PARI's exact minimal transform, round-trip verified,
\texttt{code/kappa\_exact.py}), and under the standard modular
isomorphism (infinite cusp $\mapsto O$, $\omega_f=dx/(2y+1)$) the rational
cusps $P_v$, $v\in(\Z/11\Z)^\times/\pm1$, map as
\[
\begin{gathered}
  P_1=\infty\mapsto O,\quad
  P_2\mapsto(1,0)=3A,\quad
  P_3\mapsto(0,-1)=4A,\\
  P_4\mapsto(0,0)=A,\quad
  P_5\mapsto(1,-1)=2A,
\end{gathered}
\]
i.e.\ to precisely $E(\Q)=\Z/5\Z$
\cite[proof of Thm.~8, (3.152)]{Brunault}. The supports of
$\divisor(x),\divisor(y)$ are therefore cuspidal, and $x,y$ are
\emph{modular units}. The polynomial is tempered: its Newton face polynomials
$x^3+y$, $x^3+x^2y$, $x^2y+y^2$, $y^2+y$ are all cyclotomic, so
$\{x,y\}\in K_2(E)\otimes\Q$ (Rodr\'iguez Villegas' criterion \cite{RV}).
More precisely, the tame symbols at the divisor support are computed
exactly by local expansions (\texttt{code/bertin\_diamond.py}):
$T_v\{x,y\}=\pm1$ for $v\in\{O,A,2A,3A\}$; hence already $2\{x,y\}$ has
trivial tame symbols, and the real residues $\pm\log|T_v\{x,y\}|$ of
$\eta(x,y)$ vanish at every point.

\subsection{The open signed chain and the split integral}

On the full circle $|x|=1$, $x=e^{i\theta}$, $\theta\in[-\pi,\pi]$, carry the
continuous large-modulus branch $y_{\mathrm{big}}$ with Samart's weights
$+1$ for $\theta>0$ and $-1$ for $\theta<0$, segmented at the fold points
$\theta=\pm c$, $c=\pi/2$:
\[
  \tilde\gamma
  =+\,[y_{\mathrm{big}}\text{ on }[-c,c]]
  \;-\;[y_{\mathrm{big}}\text{ on the two outer arcs}].
\]
\emph{Convention (point-valued lifts).}  Each torus branch
$\theta\mapsto y(e^{i\theta})$ is silently regarded as the point-valued
lift $\theta\mapsto(e^{i\theta},y(e^{i\theta}))\in E(\C)$; thus an
equality such as $y_{\mathrm{big}}(c^-)=-P$ below means that the lifted
point at $\theta=c^-$ equals the point $-P=(i,-e^{i\pi/4})\in E$, and
chain boundaries are divisors on $E$.
The corrected Mahler measure along $\tilde\gamma$ satisfies exact
identities. On the torus $\eta(x,y)=-\log|y|\,d\theta$, and since
$y_{\mathrm{big}}y_{\mathrm{small}}=x^3$ has modulus $1$,
$\log|y_{\mathrm{big}}|=-\log|y_{\mathrm{small}}|$ pointwise; comparing the
signed weights of $\tilde\gamma$ with the definition of $I_{\mathrm{split}}$
gives
\[
  \tilde n=-I_{\mathrm{split}},\qquad
  \int_{\tilde\gamma}\eta(x,y)=2\pi\,I_{\mathrm{split}}
\]
(numerically confirmed to $44$ digits,
\texttt{code/closedness\_check.py}), so that
\[
  \text{(C3)}\ \Longleftrightarrow\ \int_{\tilde\gamma}\eta(x,y)=2\pi b_{11}.
\]

The branch jumps at $\pm c$ are read numerically and matched with exact
values: with $P=(i,e^{i\pi/4})$, $\bar P=(-i,e^{-i\pi/4})$,
$-P=(i,-e^{i\pi/4})$ (at $x=\pm i$ the cubic-model inverse is
$(x,y)\mapsto(x,-y)$ since $B=x^2+1=0$),
\[
  \partial\tilde\gamma=[P]-[\bar P]+[-P]-[-\bar P]=:D,
  \qquad \deg D=0,\ c(D)=-D.
\]
The point $P$ is \emph{not} torsion (proved rigorously by reduction at good
primes, \S\ref{sec:cert})---but closedness does not
require torsion endpoints.

\subsection{The closed chain lemma}

The following construction (Figure~\ref{fig:chain}) closes $\tilde\gamma$
without leaving the torus.

\begin{lemma}[Closed chain lemma]\label{lem:closed}Let $\alpha_2$ be the small-branch inner arc $\theta:c\to-c$ (endpoints
$P\to\bar P$) and $\alpha_1$ the small-branch outer arc
$\theta:c\to\pi$ and $-\pi\to-c$ (endpoints $-P\to-\bar P$; the inner
interval cannot join these two points since the small branch does not pass
through them). Then for $\beta_0=\alpha_1+\alpha_2$,
\[
  \partial\beta_0=-D,\qquad c(\beta_0)=-\beta_0,
\]
hence
\[
  C':=\tilde\gamma+\beta_0:\qquad \partial C'=0,\quad c(C')=-C'
\]
is a closed, anti-invariant, integral $1$-cycle.
\end{lemma}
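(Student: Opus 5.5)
The proof is a direct bookkeeping of boundaries and of the action of complex conjugation $c$. Conjugation acts on the torus lift by $(x,y)\mapsto(\bar x,\bar y)$, so it sends the fibre over $\theta$ to the fibre over $-\theta$. It preserves moduli, and therefore preserves the branch labels ``big'' and ``small'' away from the folds.

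\textbf{Step 1: the endpoints of the small-branch arcs.} I first determine the small branch at $\theta=c^{\pm}$ and $\theta=-c^{\pm}$. Since $y_{\mathrm{big}}y_{\mathrm{small}}=x^3$, and at $x=i$ the two roots are $\pm e^{i\pi/4}$, the small branch takes the value other than the one the big branch takes on each side of the fold. The paper has already read off the big-branch jumps that give $\partial\tilde\gamma=[P]-[\bar P]+[-P]-[-\bar P]$, namely $y_{\mathrm{big}}(c^-)=-P$, and correspondingly on the other sides. From these I deduce the small-branch limits.

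\begin{itemize}
\item On the inner arc, the small branch runs from $P$ at $\theta=c^-$ to $\bar P$ at $\theta=-c^+$. These limits are forced by continuity of the small branch on the open interval $(-c,c)$ and by conjugation symmetry.
\item On the outer arc, the small branch goes from $-P$ at $c^+$ to $-\bar P$ at $-c^-$, passing continuously through $\theta=\pm\pi$. At $\theta=\pi$ the two roots are distinct, so the branch continues across.
\end{itemize}

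Hence $\partial\alpha_2=[\bar P]-[P]$ and $\partial\alpha_1=[-\bar P]-[-P]$, and their sum is $-D$. The fact that the small branch does not reach $\pm P$ on the inner interval is the statement in parentheses. It follows because the inner small-branch values near $c$ are $P$, not $-P$.

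\textbf{Step 2: anti-invariance of $\beta_0$.} Conjugation maps the small-branch lift over $[\theta_1,\theta_2]$ to the small-branch lift over $[-\theta_1,-\theta_2]$ with the orientation carried along. The inner arc $\alpha_2$, traversed $c\to-c$, is therefore sent to itself traversed $-c\to c$, so $c(\alpha_2)=-\alpha_2$. The outer arc $\alpha_1$ consists of $c\to\pi$ together with $-\pi\to-c$. Its image is $-c\to-\pi$ together with $\pi\to c$, which is $\alpha_1$ reversed, so $c(\alpha_1)=-\alpha_1$.

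\textbf{Step 3: the closed cycle.} Combining the two steps gives $\partial C'=D-D=0$. For anti-invariance, $c(\tilde\gamma)=-\tilde\gamma$ holds by the same argument: the weights $\pm1$ attached to the big-branch pieces flip under $\theta\mapsto-\theta$ exactly as the orientation does. Together with Step 2 this gives $c(C')=-C'$. The coefficients are all $\pm1$, so $C'$ is integral.

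\textbf{Main obstacle.} The delicate point is Step 1: assigning branch values to the one-sided limits at the folds. At a fold $|y_+|=|y_-|=1$, so the modulus ordering of Proposition~\ref{prop:structural} does not distinguish the roots there, and the labels must be fixed by one-sided continuity. My plan is to expand $u=\pm\sqrt{B^2-4x^3}$ near $x=i$, where $B$ has a simple zero and $-4x^3=4i\neq0$. This shows that the two roots stay analytic and distinct through $\theta=c$, while the modulus ordering switches there. Consequently the small branch on one side is the analytic continuation of the big branch on the other side. That determines all the endpoint labels exactly, with no numerics, and it is consistent with the certified data of \S\ref{subsec:rigour}.
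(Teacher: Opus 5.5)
Your proof is correct. Its skeleton is the paper's: endpoint bookkeeping, plus the observation that $c$ mirrors each $\theta$-interval, reverses its orientation and preserves the big/small labels.

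The genuine difference is how you justify the one-sided branch values at $\theta=\pm c$. The paper reads them off the continuous branches and certifies all eight in ball arithmetic at $\varepsilon=10^{-6},10^{-9}$ (\texttt{branch\_certify.py}), matching them against the exact fold values $\pm e^{\pm i\pi/4}$. You replace this with an exact local analysis at the fold. Since $B^2-4x^3=4i\neq0$ at $x=i$, the two roots are analytic and distinct through $\theta=c$ and swap their modulus order there; conjugation then carries everything from $c$ to $-c$. This makes the lemma human-checkable and removes a computer-assisted step. The paper's certification, by contrast, is uniform and is reused verbatim for $k=1$.

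To make your version genuinely numerics-free, you need two small additions.

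\emph{The sign at the fold.} The crossing alone fixes the labels only up to a swap, and in Step~1 you import $y_{\mathrm{big}}(c^-)=-P$ from the paper's numerical reading. Compute the sign instead. Put $y=x^{3/2}t$ with $x^{3/2}=e^{3i\theta/2}$; then $t+t^{-1}=-2e^{-i\theta/2}\cos\theta$. The root with $t\to\pm i$ satisfies $|t|^2=1\mp\sqrt2\,(\theta-c)+O((\theta-c)^2)$. Hence the root $t\to i$, i.e.\ $y\to-e^{i\pi/4}$ (the point $-P$), is the big one for $\theta<c$ and the small one for $\theta>c$.

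\emph{Continuity on the open arcs.} Continuity of the small branch there requires strict modulus separation away from the folds. This follows exactly from Proposition~\ref{prop:torus}: because $|y_+y_-|=1$, the moduli are equal only on the torus, i.e.\ only at $\theta\in\{0,\pm c\}$. At $\theta=0$ the two roots coincide, so continuity persists there. You therefore need not appeal to the certified bisection.

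Finally, your one-line justification of $c(\tilde\gamma)=-\tilde\gamma$ is misworded. The weights of $\tilde\gamma$ are constant on the symmetric sets $[-c,c]$ and the union of the outer arcs, so they do \emph{not} flip under $\theta\mapsto-\theta$; only the orientation does. Had the weights flipped along with the orientation, the two sign changes would cancel and give invariance rather than anti-invariance.
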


\begin{proof}
At $x=\pm i$ the coefficient $B(x)=x^2+1$ vanishes and $S_0$ reduces to
$y^2=-x^3=\mp i$; hence the two branch values are $y=\pm e^{\pm i\pi/4}$,
i.e.\ the four points $P,\bar P,-P,-\bar P$. Reading the continuous branches
on the torus (each assignment certified in interval arithmetic at two scales
$\varepsilon=10^{-6},10^{-9}$ by \texttt{code/branch\_certify.py},
\S\ref{sec:cert}, and matched with these exact values),
\[
  y_{\mathrm{big}}(c^-)=-P,\quad y_{\mathrm{big}}(c^+)=P,\quad
  y_{\mathrm{big}}(-c^+)=-\bar P,\quad y_{\mathrm{big}}(-c^-)=\bar P,
\]
so the signed chain
$\tilde\gamma=+[y_{\mathrm{big}}\text{ on }[-c,c]]-[y_{\mathrm{big}}\text{
on the outer arcs}]$ has boundary
$\partial\tilde\gamma=[P]-[\bar P]+[-P]-[-\bar P]=D$. For the small branch,
\[
  y_{\mathrm{small}}(c^-)=P,\quad y_{\mathrm{small}}(-c^+)=\bar P,\quad
  y_{\mathrm{small}}(c^+)=-P,\quad y_{\mathrm{small}}(-c^-)=-\bar P,
\]
whence $\alpha_2$ runs from $P$ to $\bar P$ and the two outer pieces of
$\alpha_1$ from $-P$ to $-\bar P$, giving
$\partial\beta_0=-D$ and $\partial C'=0$. Complex conjugation
$c:(x,y)\mapsto(\bar x,\bar y)$ mirrors each $\theta$-interval and reverses
its orientation while preserving the big/small modulus order, so
$c(\alpha_i)=-\alpha_i$ and $c(\tilde\gamma)=-\tilde\gamma$; therefore
$c(C')=-C'$.
\end{proof}

\begin{figure}[ht]
\centering
\begin{tikzpicture}[x=1cm,y=1cm,>=stealth]
  \draw[->] (-4,0) -- (4.3,0) node[right] {$\theta$};
  \draw (3.5,0.07) -- (3.5,-0.07) node[below] {$\pi$};
  \draw (-3.5,0.07) -- (-3.5,-0.07) node[below] {$-\pi$};
  \draw (1.75,0.07) -- (1.75,-0.07) node[below] {$c$};
  \draw (-1.75,0.07) -- (-1.75,-0.07) node[below] {$-c$};
  \draw (0,0.07) -- (0,-0.07) node[below] {$0$};
  \draw[very thick] (-1.75,1.1) -- (1.75,1.1);
  \node at (0,1.38) {$+\,[\,y_{\mathrm{big}}\text{ on }[-c,c]\,]$};
  \draw[very thick,dashed] (-3.5,1.1) -- (-1.75,1.1);
  \draw[very thick,dashed] (1.75,1.1) -- (3.5,1.1);
  \node at (-2.62,1.38) {$-$};
  \node at (2.62,1.38) {$-$};
  \fill (1.75,1.1) circle (1.2pt);
  \fill (-1.75,1.1) circle (1.2pt);
  \draw[thick,->] (1.65,-0.9) -- (-1.65,-0.9);
  \node at (0,-1.28) {$\alpha_2:\ P\to\bar P$};
  \draw[thick,->] (1.85,-0.9) -- (3.45,-0.9);
  \draw[thick,->] (-3.45,-0.9) -- (-1.85,-0.9);
  \node at (2.62,-1.28) {$\alpha_1:\ -P\to-\bar P$};
  \node at (-2.62,-1.28) {$\alpha_1$};
\end{tikzpicture}
\caption{The closed anti-invariant cycle $C'=\tilde\gamma+\beta_0$ on the
torus $|x|=1$ (schematic). Top: Samart's signed open chain $\tilde\gamma$ on
the big branch (weight $+$ on $[-c,c]$, weight $-$ on the outer arcs), with
branch jumps at the fold points $\theta=\pm c$. Bottom: the compensating
small-branch arcs $\beta_0=\alpha_1+\alpha_2$; their boundary is the negative
of $\partial\tilde\gamma$, so $C'$ is closed, and complex conjugation shows
$c(C')=-C'$.}
\label{fig:chain}
\end{figure}

\subsection{Homology class: \texorpdfstring{$C'=2\gamma^-$}{C'=2g-} (certified)}
\label{subsec:class}

In genus $1$ with $\Delta<0$ the period lattice has a basis $(w_1,w_2)$
with $w_1$ real and $\Re(w_2/w_1)=1/2$ (one real component); for
\texttt{11.a3}, PARI \texttt{E.omega} gives $\tau=w_2/w_1=1/2+0.2298780212\ldots i$
(in the upper-half-plane convention). The anti-invariant homology is then
explicit:

\begin{lemma}[The anti-invariant generator]\label{lem:antiinv}
Let $E/\R$ have $\Delta<0$, with period basis $(w_1,w_2)$ as above, and let
$(a,b)$ be the dual basis of $H_1(E,\Z)$. Complex conjugation acts by
\[
  a\mapsto a,\qquad b\mapsto a-b,
  \qquad\text{i.e.}\qquad
  \begin{pmatrix}1&1\\0&-1\end{pmatrix}.
\]
Consequently
\[
  H_1(E,\Z)^-=\Z\,(a-2b),
\]
the generator $a-2b$ is primitive, and its period
$\period(a-2b)=w_1-2w_2$ is the unique (up to sign) purely imaginary period
of the lattice.
\end{lemma}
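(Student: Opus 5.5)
The plan is to transport the question from homology to the period lattice, where complex conjugation becomes ordinary complex conjugation of numbers, and then do elementary integer linear algebra. Since $E$ is defined over $\R$, we fix an invariant differential $\omega$ defined over $\R$; for $S_0$ this is $\omega=dx/u$. Then $c^*\omega=\bar\omega$, and for every cycle $\gamma$
\[
  \int_{c(\gamma)}\omega=\int_\gamma c^*\omega=\overline{\int_\gamma\omega}.
\]
The period map $H_1(E,\Z)\to\Lambda=\Z w_1+\Z w_2$ is an isomorphism. So the action of $c$ on $H_1(E,\Z)$ is the action of $\lambda\mapsto\bar\lambda$ on $\Lambda$, read in the basis $(a,b)$ dual to $(w_1,w_2)$, i.e.\ $\period(a)=w_1$ and $\period(b)=w_2$. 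This step needs care with orientation and sign conventions: one must check that $c$ acts on $E(\C)\cong\C/\Lambda$ as $z\mapsto\bar z$ and does not introduce an extra sign. Choosing $\omega$ real makes this automatic, so I expect it to be the only real subtlety and not a genuine obstacle.

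Next, compute the action on the basis. Because $w_1$ is real, $\bar w_1=w_1$. Because $\Re(w_2/w_1)=1/2$, we can write $w_2=w_1/2+it$ with $t\in\R$, so $\bar w_2=w_1/2-it=w_1-w_2$. Hence $c(a)=a$ and $c(b)=a-b$. Written with columns as the images of $a$ and $b$, this is the matrix $\begin{pmatrix}1&1\\0&-1\end{pmatrix}$ of the statement.

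Then solve $c(\gamma)=-\gamma$ over $\Z$. Write $\gamma=ma+nb$. Then
\[
  c(\gamma)=(m+n)a-nb .
\]
The condition $c(\gamma)=-\gamma$ gives $m+n=-m$ (the $b$-coefficient condition is automatic), so $n=-2m$. Thus $H_1(E,\Z)^-=\Z(a-2b)$. The generator $a-2b$ is primitive because its coordinates $(1,-2)$ have $\gcd 1$.

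Finally, compute the period:
\[
  \period(a-2b)=w_1-2w_2=-2it,
\]
which is purely imaginary, and it is nonzero since $w_1,w_2$ are $\R$-independent. For uniqueness, a lattice period $\lambda$ is purely imaginary exactly when $\bar\lambda=-\lambda$. By the period-map isomorphism this says its cycle is anti-invariant, so $\lambda\in\Z(w_1-2w_2)$. The primitive purely imaginary period is therefore $\pm(w_1-2w_2)$.
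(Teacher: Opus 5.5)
Your proof is correct and matches the paper's argument step for step. You read off $c(a)=a$ and $c(b)=a-b$ from $\bar w_2=w_1-w_2$, solve $c(\gamma)=-\gamma$ to get $n=-2m$, and deduce primitivity and the purely imaginary period. The only differences are two small additions. You justify transporting conjugation to the period lattice through a real invariant differential ($\int_{c_*\gamma}\omega=\overline{\int_\gamma\omega}$), which the paper takes for granted. You also derive uniqueness from anti-invariance, whereas the paper uses the direct condition $2m+n=0$ from $\Re(mw_1+nw_2)=0$; these are the same computation.
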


\begin{proof}
Conjugation fixes the real period $w_1$, hence the class $a$, and sends the
loop $b$ to a loop of period $\overline{w_2}=w_1-w_2$, hence to the class
$a-b$; the displayed matrix is an involution. A class $\alpha a+\beta b$ is
anti-invariant iff $(\alpha+\beta)a-\beta b=-\alpha a-\beta b$, i.e.\
$\beta=-2\alpha$, so $H_1(E,\Z)^-=\Z(a-2b)$; $a-2b$ is primitive since
$\gcd(1,2)=1$. Its period $w_1-2w_2=-2i\,\Im w_2$ is purely imaginary, and
conversely a purely imaginary period $mw_1+nw_2$ must satisfy $2m+n=0$,
hence is an integral multiple of $w_1-2w_2$.
\end{proof}

The pairing $\gamma\mapsto\period(\gamma)$ identifies $H_1(E,\Z)$ with the
period lattice, so the pairing with $\omega$ is injective. We fix the sign
convention
{\footnotesize
\[
  w_{\mathrm{anti}}:=-2.9176332338769904586617792258073505143184868579\ldots\,i
  \qquad(\text{PARI \texttt{11.a3}}),
\]}
and choose $\gamma^-$ with $\period(\gamma^-)=w_{\mathrm{anti}}$. Therefore
$\period(C')/w_{\mathrm{anti}}$ is \emph{a priori} an integer.

The computation (\texttt{code/n1\_certify.py}, mpmath at $60$ and $80$
digits, cross-checked against PARI \texttt{11.a3} \texttt{E.omega}) gives
\begin{equation}\label{eq:period}
  \period(C')=I_{\mathrm{signed}}+A_{s,\mathrm{outer}}-A_{s,\mathrm{inner}}
  =-5.8352664677539809\ldots\,i,
\end{equation}
\[
  \frac{\period(C')}{w_{\mathrm{anti}}}=1.9999999999999999\ldots,\qquad
  \big|\period(C')-2w_{\mathrm{anti}}\big|=2.5\times10^{-16},
\]
where $I_{\mathrm{signed}}=-2.917633233876990458\ldots i=w_{\mathrm{anti}}$
is the period of the open signed chain and
$A_{s,\mathrm{outer}},A_{s,\mathrm{inner}}$ are the period integrals of the
compensating arcs; the $60$- and $80$-digit runs agree, the residual error
coming from the $1/\sqrt\theta$ endpoint singularity at $\theta=0$. A-priori
integrality, the distance-to-nearest-integer $2.5\times10^{-16}\ll 1$, and
the ball-arithmetic certification of \S\ref{subsec:rigour} imply
\[
  \boxed{\class(C')=2\,\gamma^-}.
\]
(As a by-product one obtains the two independent period identities
$I_{\mathrm{signed}}=w_{\mathrm{anti}}$ and
$A_{s,\mathrm{inner}}-A_{s,\mathrm{outer}}=-w_{\mathrm{anti}}$.)

\begin{remark}
The open chain $\tilde\gamma$ of \S\ref{sec:proof} is not itself closed
and cannot be assigned a homology class; the closed cycle $C'$ used here
has winding number $2$. For
contrast, the naive loop integral $I_{\mathrm{loop}}=-0.47447\ldots i$ is not
even the period of any integral cycle (ratio $0.16262\ldots$ to
$w_{\mathrm{anti}}$).
\end{remark}

\subsection{The regulator identity along the closed chain}

On $|x|=1$, $\eta(x,y)=\log|x|\,d\arg y-\log|y|\,d\arg x=-\log|y|\,d\theta$,
and since the product of the two roots is $x^3$ of modulus $1$,
$\log|y_{\mathrm{big}}|=-\log|y_{\mathrm{small}}|$ pointwise. Writing
$J_1=\int_0^c\log|y_s|\,d\theta$ and $J_2=\int_c^\pi\log|y_s|\,d\theta$,
direct computation gives
\[
  \int_{\tilde\gamma}\eta=2(J_1-J_2)
  =\underbrace{(2J_1)}_{\alpha_2}+\underbrace{(-2J_2)}_{\alpha_1}
  =\int_{\beta_0}\eta,
\]
hence the \emph{exact} identity
\begin{equation}\label{eq:double}
  \int_{C'}\eta=2\int_{\tilde\gamma}\eta.
\end{equation}

\begin{lemma}[Pairing with homology]\label{lem:homology}
The integral $\int_{C'}\eta(x,y)$ depends only on the class of $C'$ in
$H_1(E,\Z)^-$.
\end{lemma}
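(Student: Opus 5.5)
The plan is to show that $\eta(x,y)$ is a \emph{closed} $1$-form on the complement of the divisor support $S=\operatorname{supp}\divisor(x)\cup\operatorname{supp}\divisor(y)$, with vanishing real residues at every point of $S$. Stokes' theorem then makes $\int_{C'}\eta$ depend only on the class of $C'$ in $H_1(E,\Z)$, and anti-invariance of $C'$ (Lemma~\ref{lem:closed}) places that class in $H_1(E,\Z)^-$.

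\emph{Step 1: closedness.} On $E\setminus S$, both $x$ and $y$ are holomorphic and non-vanishing. We use the standard identity
\[
d\eta(x,y)=\Re\Big(\frac{dx}{x}\wedge\overline{\frac{dy}{y}}\Big)\cdot(\text{const}),
\]
which equals $\Im(\tfrac{dx}{x}\wedge\tfrac{dy}{y})$ up to sign. This is a $(2,0)$-form on a curve, hence zero. So $\eta$ is closed on $E\setminus S$.

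\emph{Step 2: residues.} Near a point $v\in S$ with local parameter $t$, write $x=t^{a}u_1$ and $y=t^{b}u_2$ with $u_1,u_2$ units. Integrating $\eta$ over a small circle around $v$ gives $\pm2\pi\log|T_v\{x,y\}|$ plus a term tending to zero, since $\log|t|$ is locally integrable against $d\arg$ of units. The tame-symbol computation recorded above gives $T_v\{x,y\}=\pm1$ at all $v\in\{O,A,2A,3A\}$, the points of $S$ (the remaining point contributes by the product formula). Hence every small-circle integral vanishes. Consequently $\eta$ defines a class in $H^1(E\setminus S,\R)$ that extends across the punctures, i.e.\ it pairs trivially with every loop around a point of $S$.

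\emph{Step 3: conclusion.} Suppose $C''$ is another integral cycle, avoiding $S$, that is homologous to $C'$ in $E$. Then $C'-C''=\partial\Sigma$ in $E$. After shrinking, $\Sigma$ meets $S$ only through small discs, so $C'-C''$ is homologous in $E\setminus S$ to a combination of small circles around the points of $S$. Steps 1 and 2 give $\int_{C'}\eta=\int_{C''}\eta$. Combined with $c(C')=-C'$, the value is therefore a function of $\class(C')\in H_1(E,\Z)^-$ alone, and additivity is immediate.

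\emph{Main obstacle.} The delicate point is that $C'$ must actually avoid $S$ and that $\eta$ must be integrable along it. I would check from Proposition~\ref{prop:torus} and the explicit torsion points that none of $(0,0)$, $(0,-1)$, $O_1$, $Q_\infty$ lies on $|x|=1$; this holds because $x\in\{0,\infty\}$ at those points. The fold points $\pm P,\pm\bar P$ are merely corners of the chain, where $\eta$ is smooth, so there is no issue there.
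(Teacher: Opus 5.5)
Your proposal is correct and is essentially the paper's own proof. The paper uses the same chain of steps: $\eta$ is closed off the divisor support, the real residues $\pm\log|T_v\{x,y\}|$ vanish because the exact tame symbols are $\pm1$, so the pairing descends to $H_1(E,\Z)$ and then, by anti-invariance, to $H_1(E,\Z)^-$, and $C'$ is admissible because $|x|=1$ avoids the points where $x\in\{0,\infty\}$. You spell out the small-circle and Stokes argument that the paper leaves implicit, and you rely on the tame-symbol computation directly rather than on the face-polynomial temperedness criterion.

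There are two small slips, neither affecting the argument. First, the expression $\Re\bigl(\frac{dx}{x}\wedge\overline{\frac{dy}{y}}\bigr)$ in your Step~1 display is not a multiple of $d\eta$; the correct identity is $d\eta(x,y)=\Im\bigl(\frac{dx}{x}\wedge\frac{dy}{y}\bigr)$, which is the one you actually use. Second, the ``remaining point'' parenthetical in Step~2 is unnecessary, since the support is exactly $\{O,A,2A,3A\}$.
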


\begin{proof}
The face polynomials of the cubic $y^2+y=x^3-x^2$ are all cyclotomic, so
by the temperedness criterion of Rodriguez~Villegas \cite{RV} the symbol
$\{x,y\}$ is tempered: all its tame symbols $T_v(\{x,y\})$ are roots of
unity (verified exactly in \texttt{code/bertin\_diamond.py}; cf.\
\S\ref{sec:proof}).  The \emph{real} residue of $\eta$ at each point $v$
of the support is therefore $\pm\log|T_v(\{x,y\})|=0$: $\eta$ carries no
delta-mass residues, is closed on the complement of the support, and has
only (integrable) logarithmic singularities at the support points.  It
follows that the pairing with $\eta$ descends to homology, and thence by
anti-invariance to $H_1(E,\Z)^-$ (Lemma~\ref{lem:antiinv}).  The chains
used here are admissible: $C'$ avoids
$\operatorname{supp}\divisor(x)\cup\operatorname{supp}\divisor(y)$, since
those points have $x\in\{0,\infty\}$ while $C'\subset\{|x|=1\}$; and at
the fold points $\log|y|=0$, so $\eta=-\log|y|\,d\theta$ is continuous
and integrable along $C'$.
\end{proof}

\subsection{Regulator constants: Bloch, Brunault, Bertin}
\label{subsec:anchor}

The divisor algebra (projective computation on the minimal cubic model
$y^2+y=x^3-x^2$, to which the data are transported by the explicit birational
change of variables; note that the two points at infinity of $S_0$ map to
\emph{different} points, $Q_\infty\mapsto O$ and $[0:1:0]\mapsto 3A$; Abel's
principal-divisor test passed) gives, with
$A=(0,0)$ the $5$-torsion point and $O=[0:1:0]$,
\[
  \divisor(x)=[A]+[2A]-[O]-[3A],\qquad
  \divisor(y)=3[2A]-2[3A]-[O],
\]
hence the diamond product
\[
  (x)\diamond(y)=6(O)-5(A)+5(2A),\qquad
  D_E\big((x)\diamond(y)\big)=-5D_E(P)+5D_E(2P),
\]
where we write $P:=A=(0,0)$ (Brunault's notation) when the point appears as
an argument of $D_E$.
The elliptic dilogarithm values (mpmath, $60$ digits; lattice basis
$(w_1,w_1-w_2)$ so that $\tau=0.5+0.2299i$; $z(P)=0.6w_1$, $z(2P)=0.2w_1$
from PARI \texttt{ellpointtoz}):
\[
  D_E(P)=0.1911937370843316957549544343121738161012\ldots
\]
The two key identities (both verified to $40$ digits, and both theorems in
the literature) are
\begin{itemize}
  \item \textbf{Brunault's coefficient} \cite[(3.151)]{Brunault}:
  $D_E(P)=\dfrac{2\pi}{5}b_{11}=\dfrac{11}{10\pi}L(E,2)$, i.e.\
  $L(E,2)=\frac{10\pi}{11}D_E(P)$;
  \item \textbf{Bertin's exotic relation} \cite{Bertin,BertinCrelle}:
  $D_E(2P)=\frac32D_E(P)$ (note $\tfrac32$, not the naive $2$).
\end{itemize}

The evaluation of the regulator of $\{x,y\}$ along $\gamma^-$ is carried out
\emph{directly}, via Brunault's proved regulator formula for Siegel units
\cite{BrunaultSiegel}; Bloch's diamond theorem is not used.  We then record
the clarification of the factor-of-two question that the computation
entails (Lemma~\ref{lem:coinvariant} and the discussion below), which is
likewise not part of the proof.

\begin{lemma}[The regulator generator and the index]\label{lem:coinvariant}
Let $E/\R$ be an elliptic curve, $c$ the complex-conjugation involution,
$H_1(E,\Z)^\pm=\ker(c_*\mp1)$, and $\{f,g\}$ a tempered symbol on $E$.
The pairing $\gamma\mapsto\int_\gamma\eta(f,g)$ descends to the quotient
$H_1(E,\Z)/H_1(E,\Z)^+$ of the integral homology by the invariant
sublattice (throughout, ``coinvariant quotient'' means exactly this
quotient by $H_1(E,\Z)^+$; we do not use $\ker(c_*+1)$ and the quotient
interchangeably, as they differ by an index that is material here).
\begin{enumerate}
  \item If $\Delta>0$, then $H_1(E,\Z)^-=Z\,b$ maps isomorphically onto
  $H_1(E,\Z)/H_1(E,\Z)^+=Z\,[b]$: the anti-invariant generator represents
  the quotient generator.
  \item If $\Delta<0$, the image of $H_1(E,\Z)^-=Z\,\gamma^-$ in
  $H_1(E,\Z)/H_1(E,\Z)^+=Z\,[b]$ is $2\,Z\,[b]$, of index $2$: with the
  basis of Lemma~\ref{lem:antiinv}, $\gamma^-=a-2b\equiv-2[b]$, and hence
  for every tempered symbol
  \[
    \int_{\gamma^-}\eta(f,g)=-2\int_{[b]}\eta(f,g).
  \]
\end{enumerate}
\end{lemma}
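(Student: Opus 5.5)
The plan is to prove the descent statement first, since it drives both items, and then compute $H_1(E,\Z)^\pm$ in the two real-structure cases from the action of $c_*$ on a lattice basis. Throughout I assume, as in every application in this paper, that $f,g$ are defined over $\R$, so that $f\circ c=\bar f$ and $g\circ c=\bar g$.

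\textbf{Descent.} First I compute $c^*\eta(f,g)$. We have $\log|f\circ c|=\log|f|$ and $\arg(g\circ c)=-\arg g$, and symmetrically with $f$ and $g$ exchanged. Hence $c^*\eta(f,g)=-\eta(f,g)$.

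Since $\{f,g\}$ is tempered, the argument of Lemma~\ref{lem:homology} applies: $\eta$ has no real residues, so $\gamma\mapsto\int_\gamma\eta$ is well defined on $H_1(E,\Z)$. Cycles are chosen in their class to avoid the finite support of the divisors, which is also conjugation-stable.

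Now let $\gamma\in H_1(E,\Z)^+$. Then
\[
  \int_\gamma\eta=\int_{c_*\gamma}\eta=\int_\gamma c^*\eta=-\int_\gamma\eta,
\]
so the integral vanishes. Therefore the pairing factors through $H_1(E,\Z)/H_1(E,\Z)^+$.

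\textbf{Case $\Delta>0$.} The period lattice is rectangular, with basis $w_1\in\R$ and $w_2\in i\R$. Conjugation then acts by $a\mapsto a$ and $b\mapsto-b$. This gives $H_1(E,\Z)^+=\Z a$ and $H_1(E,\Z)^-=\Z b$. The quotient by $\Z a$ is $\Z[b]$, and the composite $\Z b\to\Z[b]$ is an isomorphism.

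\textbf{Case $\Delta<0$.} I use the matrix of Lemma~\ref{lem:antiinv}. A class $\alpha a+\beta b$ is invariant iff $(\alpha+\beta)a-\beta b=\alpha a+\beta b$, that is, iff $\beta=0$. So $H_1(E,\Z)^+=\Z a$ and the quotient is $\Z[b]$, freely generated.

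By Lemma~\ref{lem:antiinv}, $\gamma^-=a-2b$, whose image is $-2[b]$. The image of $H_1(E,\Z)^-$ is therefore $2\Z[b]$, of index $2$. Applying the descended pairing to $\gamma^-\equiv-2[b]$ gives $\int_{\gamma^-}\eta=-2\int_{[b]}\eta$.

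\textbf{Main obstacle.} The only delicate point is the descent step. It needs the reality of $f,g$ for the sign $c^*\eta=-\eta$, and it needs the homology invariance of Lemma~\ref{lem:homology} to hold for arbitrary integral cycles, not only for $C'$. The latter holds by the same residue argument, after moving cycles off the divisor support. I would state the reality hypothesis explicitly, since the lemma as written leaves it implicit.
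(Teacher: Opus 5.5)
Your proof is correct and follows the paper's argument essentially step for step. In both, the pairing vanishes on $H_1(E,\Z)^+$ because $c^*\eta=-\eta$, and then one reads off $H_1(E,\Z)^+=\Z a$ and the image of $a-2b$ (resp.\ $b$) in $\Z[b]$ from the action of $c_*$ in each case. Stating explicitly that $f,g$ are defined over $\R$ (which the paper leaves implicit, citing Lal\'in--Ramamonjisoa for $c^*\eta=-\eta$), and noting that the residue argument gives well-definedness on all of $H_1(E,\Z)$, are appropriate clarifications rather than departures.
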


\begin{proof}
The pairing is trivial on $H_1(E,\Z)^+$: if $c(\gamma)=\gamma$ then
$\int_\gamma\eta=\int_{c(\gamma)}\eta=\int_\gamma c^*\eta=-\int_\gamma\eta$
since $c^*\eta=-\eta$ (cf.\ \cite[Rem.~4]{LalinRam}); hence it descends to
the coinvariants.  For $\Delta<0$, $c_*:a\mapsto a$, $b\mapsto a-b$
(Lemma~\ref{lem:antiinv}), so $H_1(E,\Z)^+=Z\,a$ and
$H_1(E,\Z)/H_1(E,\Z)^+=Z\,[b]$; the anti-invariant class $a-2b$ projects
to $-2[b]$, giving index $2$.  For $\Delta>0$, $c_*:a\mapsto a$,
$b\mapsto-b$, so $H_1^-=Z\,b$ and the projection $b\mapsto[b]$ is an
isomorphism.
\end{proof}

\paragraph{The anchor: a direct Siegel-unit computation.}
The symbol $\{x,y\}$ is a pair of modular units (\S\ref{sec:proof}, Modular
units: its divisors are supported on the rational cusps of $X_1(11)$).
Regulator integrals of Siegel units are computed by a proved formula that
involves neither the diamond product nor a choice of homology lattice:
for Siegel units $g_u,g_v$, $u=(a,b)$, $v=(c,d)\in(\Z/N\Z)^2\setminus0$,
\begin{equation}\label{eq:siegelthm}
  \int_0^{i\infty}\eta(g_u,g_v)
  \;=\;\pi\,\Lambda^*\bigl(e_{a,d}\,e_{b,-c}+e_{a,-d}\,e_{b,c},\,0\bigr),
\end{equation}
with $e_{a,b}$ the explicit weight-$1$ level-$N^2$ Eisenstein series of
\cite[Thm.~1, eq.~(2)]{BrunaultSiegel} and $\Lambda^*$ the regularized
completed $L$-value of \cite[\S2]{BrunaultSiegel}; arbitrary modular
symbols are handled by linearity \cite[Rem.~2]{BrunaultSiegel}.  (We
verified the factor $\pi$ in \eqref{eq:siegelthm} against the article's
source, and validated our implementation of the formula by reproducing the
conductor-$14$ application of \cite[\S5.1]{BrunaultSiegel} to $60$ digits
by three independent methods; archive
\texttt{notes/attack16-siegel-anchor.txt}.)

\begin{theorem}[The regulator anchor]\label{thm:anchor}
With $\gamma^-$ the anti-invariant generator of Lemma~\ref{lem:antiinv},
\begin{equation}\label{eq:anchor}
  \int_{\gamma^-}\eta(x,y)=\pm\,2\pi b_{11},
\end{equation}
the sign depending on orientations.
\end{theorem}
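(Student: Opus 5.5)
The plan is to compute $\int_{\gamma^-}\eta(x,y)$ directly from the Siegel-unit formula \eqref{eq:siegelthm}, working on $X_1(11)$.

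\textbf{Step 1: Siegel presentations of $x$ and $y$.} Transport $x,y$ to the minimal model $y^2+y=x^3-x^2=X_1(11)$, where
\[
\divisor(x)=[A]+[2A]-[O]-[3A],\qquad \divisor(y)=3[2A]-2[3A]-[O].
\]
Use the cusp dictionary $P_1\mapsto O$, $P_2\mapsto 3A$, $P_3\mapsto 4A$, $P_4\mapsto A$, $P_5\mapsto 2A$ to rewrite these as cuspidal divisors. Then write $x$ and $y$, up to constants, as products $\prod_a g_{(0,a)}^{m_a}$ of level-$11$ Siegel units. The exponents $m_a$ are found by solving the exact linear system given by the orders of $g_{(0,a)}$ at the cusps, which are $B_2$-values. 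I would confirm each presentation exactly by comparing enough $q$-expansion coefficients against the Sturm bound. Multiplicative constants are harmless: $\eta(\lambda,g)=\log|\lambda|\,d\arg g$ is exact on the cycle, and $\eta(\lambda,\mu)=0$.

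\textbf{Step 2: expand by bilinearity.} Expanding, $\eta(x,y)=\sum_{a,b}m_an_b\,\eta(g_a,g_b)$. Next, express $\gamma^-$ as a modular symbol. Pull back the generator of $H_1(E,\Z)^-$ to $X_1(11)$ via the degree-one identification; it is a $\Z$-combination of symbols $\{g0,g\infty\}$ with $g\in\Gamma_0(11)$ (for instance $\{0,\infty\}$ and its translates under $\Gamma_0(11)/\Gamma_1(11)$). I would pin it down by matching its $\omega_f$-period with $w_{\mathrm{anti}}$. Each $\int_{g0}^{g\infty}\eta(g_u,g_v)$ equals $\int_0^{\infty}\eta(g_{ug},g_{vg})$, so \eqref{eq:siegelthm} applies after relabelling indices, following \cite[Rem.~2]{BrunaultSiegel}. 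This gives
\[
\int_{\gamma^-}\eta(x,y)=\pi\,\Lambda^*(F_{\mathrm{total}},0),
\]
where $F_{\mathrm{total}}$ is an explicit combination of products $e_{a,d}e_{b,-c}$ of weight-$1$ Eisenstein series of level $121$.

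\textbf{Step 3: identify $F_{\mathrm{total}}$.} This is the main obstacle. One must show that $F_{\mathrm{total}}$, after projecting to the part relevant for $L$-values, equals $-2f_{11}$ (or $-2f_{11}(11\tau)$ rescaled) plus terms with vanishing $\Lambda^*$. Eisenstein contributions should cancel, by temperedness and the vanishing of the tame residues. I would first try to prove this as an exact identity in $M_2(\Gamma_1(121))$: compute $q$-expansions of each $e_{a,b}$ exactly in $\Q(\zeta_{11})$ and check coefficient agreement up to the Sturm bound ($\approx 121\cdot[\mathrm{SL}_2:\Gamma_1(121)]/12$ terms). If the Eisenstein part does not cancel on the nose, I would instead show its contribution to $\Lambda^*$ vanishes at $s=0$ by the functional-equation parity.

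\textbf{Step 4: conclude.} From $\Lambda(f_{11},s)=11^{s/2}(2\pi)^{-s}\Gamma(s)L(f_{11},s)$ at the appropriate point, together with $b_{11}=\frac{11}{4\pi^2}L(E,2)$, we get
\[
\pi\,\Lambda^*(-2f_{11},0)=\pm 2\pi b_{11},
\]
with the normalisation constant to be checked against \cite[\S2]{BrunaultSiegel}. The overall sign depends on the orientation of $\gamma^-$, which is why the statement is only up to sign. As a consistency check, not a proof step, I would compare with the numerical value $\int_{\tilde\gamma}\eta=2\pi I_{\mathrm{split}}$ together with \eqref{eq:double}.
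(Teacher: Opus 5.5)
Your overall route is the paper's: Siegel presentations of $x\circ\pi,y\circ\pi$, Brunault's formula \eqref{eq:siegelthm} summed over a Manin-symbol decomposition of $\gamma^-$, an exact Sturm identity $F_{\mathrm{total}}=-2f_{11}$ in $M_2(\Gamma_1(121))$, and $\Lambda(f_{11},0)=L'(f_{11},0)=b_{11}$. However, the cycle in your Step~2 cannot work.

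For $g\in\Gamma_0(11)$ the symbols $\{g0,g\infty\}$ are, on $X_1(11)$, just the five diamond translates $\langle d\rangle\{0,\infty\}$. Their boundaries $[\langle d\rangle\infty]-[\langle d\rangle 0]$ involve ten distinct cusps, so no nonzero $\Z$-combination of them is closed. Moreover, since $f_{11}$ is $\Gamma_0(11)$-invariant, each has the same $\omega_f$-period, a nonzero real multiple of $L(f_{11},1)$. So no combination can match the purely imaginary $w_{\mathrm{anti}}$, and your ``pin it down by the period'' step has no solution. You need general $\mathrm{SL}_2(\Z)$ Manin symbols. The paper takes $\gamma^-=\{0,\tfrac{3}{11}\}-\{0,\tfrac{8}{11}\}$, which is closed on $X_1(11)$ because $3\equiv-8\pmod{11}$ and is anti-invariant under $\tau\mapsto-\bar\tau$. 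It splits this by continued fractions into seven Manin symbols and proves by a Smith-normal-form computation on $H_1(X_1(11),\Z)$ that it is $\pm$ the primitive generator. A certified period comparison plus a-priori integrality would also do, but only once you have a genuinely closed anti-invariant cycle.

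The second gap is your dismissal of the multiplicative constants. The form $d\arg g$ is closed but \emph{not} exact on $\gamma^-$: $\int_{\gamma^-}d\arg g$ is $2\pi$ times a winding number. Writing $x\circ\pi=C_xU$ and $y\circ\pi=C_yV$, you get $\int_{\gamma^-}\eta(x\circ\pi,y\circ\pi)=\int_{\gamma^-}\eta(U,V)+\log|C_x|\,D_V-\log|C_y|\,D_U$ with $D_F=\int_{\gamma^-}d\arg F$. For the paper's presentation $D_U=2\pi\neq0$. So the correction vanishes only because $|C_x|=|C_y|=1$, and this must be proved. The paper does so by evaluating at cusps where both sides have order $0$ and the leading $q$-coefficients are roots of unity, obtaining $C_x=-1$ and $C_y=+1$. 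Siegel units can carry leading factors such as $1-\zeta_{11}^a$ of modulus $\neq1$. Had the constants not been unimodular, dropping them would change the integral by the nonzero amount $\log|C_x|D_V-\log|C_y|D_U$, which your Step~3 would then wrongly have to absorb.

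Two smaller points. First, in the convention of \eqref{eq:siegelthm} every $g_{(0,a)}$ has the same order at all five rational cusps $k/11$, so these units cannot realize the required divisors. The paper uses $G_a=\prod_b g_{a,b}$ and proves the presentation by divisor comparison (a ratio of $\Gamma_1(11)$-invariant functions with equal divisors is constant), not by a weight-$0$ Sturm check. Second, the weight-$2$ Sturm bound is $\tfrac{2}{12}\cdot 7260=1210$; your factor $121$ is merely wasteful.
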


\begin{proof}
All steps are exact rational/$q$-expansion computations or are archived
(\texttt{code/siegel\_anchor\_step1}--\texttt{14},
\texttt{notes/attack16-siegel-anchor.txt},
\texttt{notes/attack17-membership.txt},
\texttt{notes/attack17-primitivity.txt},
\texttt{notes/attack17-constants.txt}).
\emph{(1) Cusps and divisors.}  Under the modular parametrization
$\pi\colon X_1(11)\to E$ attached to $f_{11}$ (infinite cusp $\mapsto O$),
the rational cusps map as $k/11\mapsto m_kA$ with
\begin{equation}\label{eq:mtable}
  (m_1,\dots,m_5)=(0,2,1,4,3).
\end{equation}
This is derived exactly, not numerically.  Brunault, after Lecacheux,
tabulates the five rational cusps $P_v$ of $X_1(11)$, indexed by
$v\in(\Z/11\Z)^\times/\{\pm1\}$, as points of $E$
\cite[(3.152)--(3.153)]{Brunault}: $P_1=\infty$, $P_2=(1,0)$,
$P_3=(0,-1)$, $P_4=(0,0)$, $P_5=(1,-1)$, with $P_{4^a}=a\,P_4$ and $P_4$
a generator of $E(\Q)\cong\Z/5\Z$.  With the group law $2A=(1,-1)$,
$3A=(1,0)$, $4A=(0,-1)$ for $A=(0,0)$ this reads $P_v=n(v)A$,
$n=(0,3,4,1,2)$.  The label conversion is the \emph{inverse} one, and it
follows directly from Brunault's definition
$P_v=\langle v\rangle i\infty=[0,v]$: a matrix
$\gamma=\bigl(\begin{smallmatrix}k&b\\11&d\end{smallmatrix}\bigr)
 \in\mathrm{SL}_2(\Z)$
represents the cusp $k/11$, and the determinant $kd-11b=1$ gives
$d\equiv k^{-1}\pmod{11}$; hence the bottom-row label of $k/11$ is
$[0,d]=[0,k^{-1}]$, that is,
\begin{equation}\label{eq:labelconv}
  k/11 \;=\; P_{k^{-1}}
\end{equation}
up to the $\pm1$ convention.  No divisor comparison enters this
identification.  Together with the group-law reading above,
\eqref{eq:labelconv} gives \eqref{eq:mtable}.  Hence
$x\circ\pi,y\circ\pi$ have cusp orders
\[
  \operatorname{ord}_{k/11}(x\circ\pi)=(-1,+1,+1,0,-1),\qquad
  \operatorname{ord}_{k/11}(y\circ\pi)=(-1,+3,0,0,-2)
\]
for $k=1,\dots,5$, matching exactly the divisors displayed at the
beginning of this subsection; conversely, these orders together with the
Kubert--Lang orders of \emph{(2)} force the tuple \eqref{eq:mtable}
uniquely, an independent exact check that does not enter the
identification above.  The $60$-digit Abel-integral
evaluation of the cusp images agrees with both derivations and is used
only as a check (\texttt{siegel\_anchor\_step14.py},
\texttt{notes/attack17-constants.txt}).
\emph{(2) Siegel presentations.}  With
$G_a:=\prod_{b\bmod 11}g_{a,b}$ and the Kubert--Lang cusp orders
$\operatorname{ord}_{(r,t)}g_{a,b}=\tfrac{11}{2}B_2(\{(ar+bt)/11\})$ on the
$60$ cusps of $X(11)$, exact rational linear algebra gives
\begin{equation}\label{eq:siegelpres}
  x\circ\pi=-\,\frac{G_4G_5}{G_2^2},
  \qquad
  y\circ\pi=\frac{G_1G_5^{\,3}}{G_2^{\,3}G_3}.
\end{equation}
Each side of \eqref{eq:siegelpres} is $\Gamma_1(11)$-invariant with the
same cusp divisor, so each ratio is a nonzero constant:
$x\circ\pi=C_x\,U$, $y\circ\pi=C_y\,V$ with $U=G_4G_5/G_2^2$,
$V=G_1G_5^3/(G_2^3G_3)$.  The constants are determined exactly.  At a
cusp where both sides have order $0$ one may simply evaluate: by
\eqref{eq:mtable}, $\pi(4/11)=4A$ and $\pi(3/11)=A$ are such cusps for
$U$ and $V$ respectively; the leading $q$-coefficients $\kappa_c$ of
$U,V$ at a cusp $c$ are explicit roots of unity (write
$\gamma i\infty=c$ as a word in $S,T$ and apply
\cite[Lemma~4 and eq.~(3)]{BrunaultSiegel}; the exact cyclotomic
computation gives $\kappa_{4/11}(U)=\kappa_{3/11}(V)=-1$), and
$x(4A)=1$, $y(A)=-1$, hence
\[
  C_x=\frac{x(4A)}{\kappa_{4/11}(U)}=\frac{1}{-1}=-1,\qquad
  C_y=\frac{y(A)}{\kappa_{3/11}(V)}=\frac{-1}{-1}=+1
\]
(\texttt{siegel\_anchor\_step14.py}).  Because the constants have modulus
one, their correction to the regulator integral vanishes: since
$\eta(CU,C'V)=\eta(U,V)+\log|C|\,d\!\arg V-\log|C'|\,d\!\arg U$, one has
$\int_{\gamma^-}\eta(x\circ\pi,y\circ\pi)=\int_{\gamma^-}\eta(U,V)
 +\log|C_x|\,D_V-\log|C_y|\,D_U$ with $D_F:=\int_{\gamma^-}d\!\arg F$,
and Brunault's Lemma~5 \cite{BrunaultSiegel},
\[
  \int_0^{i\infty}d\!\arg g_{a,b}=
  \begin{cases}
    0 & \text{if }a\equiv0\text{ or }b\equiv0\pmod{11},\\[1mm]
    2\pi\bigl(\{\tfrac{a}{11}\}-\tfrac12\bigr)
        \bigl(\{\tfrac{b}{11}\}-\tfrac12\bigr) & \text{otherwise},
  \end{cases}
\]
evaluates each piece of each of the seven Manin symbols of \emph{(3)} as
an explicit rational multiple of $2\pi$ (the factors $w$ in
$g_{a,b}\circ\gamma=w\,g_{(a,b)\gamma}$ are constant, hence invisible to
$d\!\arg$); exact summation gives
\begin{equation}\label{eq:DUDV}
  D_U=2\pi,\qquad D_V=0
\end{equation}
(\texttt{siegel\_anchor\_step14.py}).  Note that $D_U=2\pi\neq0$: $U$
has winding number $1$ along $\gamma^-$, so the vanishing of the
correction is not automatic --- it holds because
$\log|C_x|=\log|C_y|=0$ exactly.
\emph{(3) The cycle.}  Take the modular symbol
$\gamma^-=\{0,\tfrac{3}{11}\}-\{0,\tfrac{8}{11}\}$: it is closed on
$X_1(11)$ (as $3\equiv-8\bmod 11$) and anti-invariant under
$\tau\mapsto-\bar\tau$.  Its primitivity is proved by exact integral
linear algebra (\texttt{siegel\_anchor\_step13.py},
\texttt{notes/attack17-primitivity.txt}): for Manin symbols of
$\pm\Gamma_1(11)$ (index $60$ in $\mathrm{PSL}_2(\Z)$, parametrized by
bottom rows $(c,d)\in((\Z/11\Z)^2\setminus0)/\pm1$) subject to $x+xS=0$
and $x+xR+xR^2=0$, with boundary map to the $10$ cusps, a Smith normal
form computation gives $H_1(X_1(11),\Z)=\ker\partial\cong\Z^2$
torsion-free with an explicit integral basis; conjugation induces
$C=\bigl(\begin{smallmatrix}0&1\\1&0\end{smallmatrix}\bigr)$ on $H_1$,
so $H_1^-=\ker(C+I)=\Z\cdot(-1,1)$; and the seven-symbol chain below
has coordinates $(1,-1)$ in this basis --- exactly $\pm1$ times a
generator of $H_1^-$ (equivalently, it has intersection number $\pm1$
with an integral class).  Hence it is the class $\pm(a-2b)$ of
Lemma~\ref{lem:antiinv}.  (Its period equals $w_{\mathrm{anti}}$ to $60$
digits, and PARI's modular-symbol normalization takes the exact value
$v^-=1$ on it; both are checks only.)  Continued fractions decompose it
into seven Manin symbols:
\[
\begin{aligned}
  \{0,\tfrac{3}{11}\}&=\textstyle
   +\bigl[\begin{smallmatrix}1&0\\3&1\end{smallmatrix}\bigr]
   -\bigl[\begin{smallmatrix}1&1\\3&4\end{smallmatrix}\bigr]
   +\bigl[\begin{smallmatrix}3&1\\11&4\end{smallmatrix}\bigr],\\
  \{0,\tfrac{8}{11}\}&=\textstyle
   +\bigl[\begin{smallmatrix}1&0\\1&1\end{smallmatrix}\bigr]
   -\bigl[\begin{smallmatrix}1&2\\1&3\end{smallmatrix}\bigr]
   +\bigl[\begin{smallmatrix}3&2\\4&3\end{smallmatrix}\bigr]
   -\bigl[\begin{smallmatrix}3&8\\4&11\end{smallmatrix}\bigr].
\end{aligned}
\]
\emph{(4) Evaluation.}  Applying \eqref{eq:siegelthm} term by term to
\eqref{eq:siegelpres} along these symbols expresses the integral as
$\pi\,\Lambda^*(F_{\mathrm{total}},0)$, where $F_{\mathrm{total}}$ is an
explicit finite signed sum of products
$e_{a,d}e_{b,-c}+e_{a,-d}e_{b,c}$.  Each $e_{a,b}$ is an Eisenstein
series of weight $1$ on $\Gamma_1(121)$
\cite[Definition~10 and Lemma~11]{BrunaultSiegel}, holomorphic on
$\mathcal{H}$ and at every cusp; hence the \emph{unconditional}
membership $F_{\mathrm{total}}\in M_2(\Gamma_1(121))$, and
$D:=F_{\mathrm{total}}+2f_{11}\in M_2(\Gamma_1(121))$.  The Sturm bound
for this space is
$\frac{2}{12}\,[\mathrm{PSL}_2(\Z):\bar\Gamma_1(121)]
 =\frac{2}{12}\cdot 7260=1210$ (the index being
$\frac{121^2}{2}(1-\frac1{121})=7260$, as $-I$ acts trivially in even
weight).  Exact rational $q$-expansion arithmetic (the higher
$e$-coefficients are integral and
$\alpha_0(a,b)\in\frac1{22}\Z$; the convolutions are evaluated exactly)
gives $a_n(D)=0$ for all $0\le n\le 2420$ --- twice the sharp bound, and
in particular through the conservative $\mathrm{SL}_2$-index convention
$\frac{2}{12}\cdot14520=2420$ (\texttt{siegel\_anchor\_step12.py},
\texttt{notes/attack17-membership.txt}).  By Sturm's theorem $D=0$,
that is,
\[
  F_{\mathrm{total}}=-2\,f_{11}
\]
as modular forms; a fortiori $F_{\mathrm{total}}\in M_2(\Gamma_0(11))$.
(The same Sturm computation applied symbol by symbol certifies that the
symbols $\bigl[\begin{smallmatrix}1&0\\3&1\end{smallmatrix}\bigr]$ and
$\bigl[\begin{smallmatrix}1&2\\1&3\end{smallmatrix}\bigr]$ contribute
$-f_{11}$ each and that the remaining five vanish identically; the main
argument uses only the total.)  Therefore the
$\Lambda$-sum equals $-2\,\Lambda(f_{11},0)=-2\,b_{11}$, since with
$\Lambda(f,s)=11^{s/2}(2\pi)^{-s}\Gamma(s)L(f,s)$ and $L(f_{11},0)=0$
(root number $+1$) one has $\Lambda(f_{11},0)=L'(f_{11},0)=b_{11}$.
Multiplying by the factor $\pi$ of \eqref{eq:siegelthm} gives
$\int_{\gamma^-}\eta(x,y)=-2\pi b_{11}$ with the orientation of (3),
i.e.\ \eqref{eq:anchor} in general.  Independent corroboration: the
numerical $\Lambda$-sum is $-2b_{11}$ to $50$ digits
(\texttt{siegel\_anchor\_step8.py}), and direct numerical integration of
$\eta(x,y)$ along $\gamma^-$ with the opposite orientation gives
$+0.9559686854216584787\ldots$ to $45$ digits
(\texttt{siegel\_anchor\_step11.py}).
\end{proof}

The regulator side of (C3) is therefore a proved theorem, independent of
Bloch's diamond formula; the remaining gap, closed in \S\ref{subsec:class},
is the identification of Boyd's split-integral chain with the closed cycle
$C'=2\gamma^-$.

\paragraph{The status of Bloch's diamond formula (not used).}
For orientation we record what the computation implies for Bloch's theorem.
As transmitted by \cite[Thm.~6]{LalinRam}, that theorem states
\begin{equation}\label{eq:bloch}
  \int_{\gamma}\eta(f,g)
  =D_E\big((f)\diamond(g)\big)
\end{equation}
for $\gamma$ a generator of the anti-invariant \emph{subgroup}
$H_1(E,\Z)^-$, with $D_E$ given by \cite[Def.~5, eq.~(10)]{LalinRam} (that
series is, term by term, the series we implement; a non-rigorous numerical
comparison on both curves agrees to $60$ digits).  On the present curve
($\Delta<0$) the subgroup reading of \eqref{eq:bloch} with factor $1$ is
incompatible with the proved value of the anchor:
$D_E((x)\diamond(y))=\frac52D_E(P)=\pi b_{11}$ while
$\int_{\gamma^-}\eta(x,y)=\pm2\pi b_{11}$.  Every verified datum --- the
anchor above, Brunault--Bertin's value for $\{x_W,y_W\}$
(Remark~\ref{rem:brunault-dict}), direct numerical integration on both
generators (Remark~\ref{rem:diamondk0}), and the proved conductor-$17$
computation of \cite{LalinRam}, where the subgroup and the quotient
coincide (Lemma~\ref{lem:coinvariant}~(1)) --- is instead consistent with
\eqref{eq:bloch} holding with factor $1$ for a generator $\bar\gamma$ of
the coinvariant quotient $H_1(E,\Z)/H_1(E,\Z)^+$, the group on which the
regulator is naturally defined (both sides being extended $\Q$-linearly in
the symbol); by Lemma~\ref{lem:coinvariant}~(2) the subgroup generator then
reads twice as much, exactly as observed.  We record this neutrally as a
\emph{normalization/lattice discrepancy} --- subgroup versus
coinvariant-quotient generator, related by the index $2$ of
Lemma~\ref{lem:coinvariant}~(2) --- which the present data resolve in
favour of the quotient reading; whether this reflects a misnormalization
in the transmitted subgroup statement of \cite[Thm.~6]{LalinRam} (a
statement their paper never uses) or a convention we have not
reconstructed, we leave as an open question for the literature.  We
emphasize once more that nothing in this paper depends on the answer.

\begin{remark}[The factor $2$ in the $\diamond$-form of Bloch's theorem,
resolved]\label{rem:diamondk0}
The formal expansion evaluates
$D_E((x)\diamond(y))=\frac52D_E(P)=\pi b_{11}$, while the certified integral
over the subgroup generator is $\int_{\gamma^-}\eta=2\pi b_{11}$: in the
factor-$1$ normalization $r(\{x,y\})[\gamma]=D^E((x)\diamond(y))$,
$r[\gamma]=\int_\gamma\eta$, of \cite[Thm.~6]{LalinRam} the two sides differ
by exactly $2$. We investigated this discrepancy with three \emph{different}
tempered symbols on the same curve (\texttt{code/bertin\_diamond.py},
\texttt{code/bertin\_diamond.gp}, archive
\texttt{notes/attack13-bertin-diamond.txt}): the Weierstrass symbol above,
our $S_0$-symbol, and the coordinate symbol $\{X,Y\}$ of Bertin's cubic
$(X+1)(Y+1)(X+Y+1)+XY=0$, which maps to $E$ by an explicit Riemann--Roch
transformation (PARI \texttt{ellidentify} certifies the image model as
\texttt{11.a3}). The $\diamond$-values are $-\frac52$, $+\frac52$ and
$\frac{35}{2}$ times $D_E(P)$ respectively, while direct integration of
$\eta$ along $\gamma^-$ gives $-5$, $+5$ and $+35$ times $D_E(P)$ (the first
two certified as above; the third by extrapolated Riemann sums, consistent
with $2\pi\,m(C_1)=14\pi b_{11}$). The factor is exactly $2$ in all three
cases: it is a property of $(E,\gamma^-)$, not of the symbol, the
$\diamond$-convention (identical on both curves), the tame symbols (all
roots of unity), or the $D_E$ series.
The resolution is the index phenomenon of Lemma~\ref{lem:coinvariant}. The
regulator pairing of \cite[Def.~3]{LalinRam} vanishes on $H_1(E(\C),\Z)^+$
(their Remark~4), hence factors through the \emph{coinvariant quotient}
$H_1/H_1^+$; Bloch's theorem \eqref{eq:bloch} therefore evaluates the
integral over the \emph{quotient} generator $[\gamma^-_0]=[b]$ with
factor $1$, and the integral over the \emph{subgroup} generator
$\gamma^-=a-2b$ acquires the index $2$ of $\Z\cdot\gamma^-$ in $H_1/H_1^+$
(Lemma~\ref{lem:coinvariant}(2)). On the conductor-$17$ curve the index is
$1$ (Lemma~\ref{lem:coinvariant}(1): $\Delta>0$, $b\mapsto-b$), the subgroup
and quotient coincide, and the same computation closes with factor $1$ ---
corroborated there by the proved theorem of \cite{LalinRam}. The three
symbols' uniform factor $2$ is thus exactly the $\Delta<0$ index, and the
conductor-$17$ factor $1$ its $\Delta>0$ counterpart.
We verified the quotient reading numerically on the conductor-$11$ curve
itself (\texttt{code/verify\_coinvariant.gp}, archive
\texttt{notes/attack15-coinvariant.txt}): integrating $\eta(x_W,y_W)$ along
cycles shifted to avoid its poles, with a centered discretization whose
convergence order is measured empirically to be $2.000000$ at every
doubling from $N=500$ to $N=8000$,
\[
  \textstyle
  \int_{a}\eta=0,\qquad
  \int_{b}\eta=-\pi b_{11}
   =D^E\bigl((x_W)\diamond(y_W)\bigr),\qquad
  \int_{a-2b}\eta=2\pi b_{11},
\]
where the first integral converges to $0$ spectrally (below $10^{-75}$, the
working-precision floor, from $N=2000$ on), and the raw errors of the
second and third decrease like $N^{-2}$ from $-1.4\cdot10^{-5}$ resp.\
$-6.5\cdot10^{-6}$ at $N=500$ to $-5.4\cdot10^{-8}$ resp.\
$-2.5\cdot10^{-8}$ at $N=8000$; a single $p=2$ Richardson step on the
finest pair leaves residuals $-1.8\cdot10^{-15}$ and $-9.2\cdot10^{-16}$
against $-\pi b_{11}$ and $2\pi b_{11}$, and
$-2\int_b\eta-\int_{a-2b}\eta=4.6\cdot10^{-15}$ on the extrapolants.
We stress that this is a numerical consistency check, not a proof.
Subject to that caveat, the factor-$1$ identity \eqref{eq:bloch} holds for
the symbol
$\{x_W,y_W\}$ on the quotient generator, and its subgroup-generator value is
the doubled \eqref{eq:anchor}.
A by-product: $D_E((X)\diamond(Y))=7\,D_E((x)\diamond(y))$ exactly, which
explains the coefficient $7$ in Bertin's $m(C_1)=7b_{11}$ at the level of
$K_2(E)$. Finally, the form $\pi r=D_E(\cdot)$ found in expository accounts
(e.g.\ \cite[Thm.~1]{Touafek}) is consistent with neither the factor-$1$
nor the factor-$2$ data above and is not used anywhere in this paper.
\end{remark}

\begin{remark}[Cross-check against Brunault's symbol dictionary]
\label{rem:brunault-dict}
Our proof uses Bloch's theorem only through its proved instance on the
conductor-$17$ curve (Appendix~\ref{app:k1}); the following is a
cross-check, not part of the proof.
Brunault's Theorem~3.9.3.118 \cite{Brunault} evaluates
$r_{\gamma^-}\{x,y\}$ for Weierstrass coordinates $x,y$ on $X_1(11)$ in
terms of $D_E$, and its proof concludes with (3.210)--(3.211):
\begin{align}
  r_{\gamma^-}\{x_W,y_W\}
  &=\frac{1}{2\pi}\,D^E\bigl(8(O)+5(A)-5(2A)\bigr)
  =-\frac{5}{2\pi}\,D_E(P),\label{eq:dict1}\\
  \bigl|r_{\gamma^-}\{x_W,y_W\}\bigr|
  &=\frac{5}{2\pi}\,D_E(P)\;=\;b_{11},\label{eq:dict2}
\end{align}
where \eqref{eq:dict2} substitutes Brunault's own Corollaire~3.5.101
$\zeta$-evaluation $L'(E,0)=5D_E(P)$ \cite[(3.151)]{Brunault} into his
(3.211). We checked the identification of
\eqref{eq:dict1}--\eqref{eq:dict2} with our $\{x_W,y_W\}$ in three
independent ways: textually (the proof states that $x,y$ are the pull-backs
of ``coordonn\'ees de Weierstrass'' via $j^*$ ``sur un mod\`ele de
Weierstrass''); divisor-wise ($\divisor x_W=[A]+[4A]-2[O]$,
$\divisor y_W=2[A]+[3A]-3[O]$, and
$(x_W)\diamond(y_W)\equiv 8(O)+5(A)-5(2A)$ modulo $\Z[E(\Q)_{\mathrm{tors}}]$); and
numerically, $5D_E(P)/(2\pi)=b_{11}$ to all $60$ computed digits against
the certified value of \S\ref{sec:cert}. Since Brunault's $\gamma$ is the \emph{subgroup} generator
(his footnote~2: $H_1(E(\C),\Z)^-=\Z\cdot\gamma$), \eqref{eq:dict2}
reads $\bigl|\int_{\gamma^-}\eta(x_W,y_W)\bigr|=2\pi b_{11}$, which is
exactly Lemma~\ref{lem:coinvariant}(2) applied to the quotient value
$\int_{[\gamma^-_0]}\eta(x_W,y_W)=D^E((x_W)\diamond(y_W))=-\pi b_{11}$: the
dictionary, the Bloch anchor, and our numerics all agree once the index is
taken into account.
\end{remark}

\subsection{Synthesis}

\begin{theorem}[(C3)]\label{thm:C3}
$I_{\mathrm{split}}=b_{11}$.
\end{theorem}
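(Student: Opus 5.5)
The plan is to assemble the pieces already established in \S\ref{sec:proof}, with one remaining task: fixing the sign. First, by the discussion in the subsection on the open signed chain, the statement is equivalent to
\[
  \int_{\tilde\gamma}\eta(x,y)=2\pi b_{11}.
\]
Second, Lemma~\ref{lem:closed} gives the closed anti-invariant integral cycle $C'=\tilde\gamma+\beta_0$, and the exact identity \eqref{eq:double} gives $\int_{C'}\eta=2\int_{\tilde\gamma}\eta$. So it suffices to show $\int_{C'}\eta=4\pi b_{11}$.

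Next, by Lemma~\ref{lem:homology} the integral $\int_{C'}\eta$ depends only on the class of $C'$ in $H_1(E,\Z)^-$. By \S\ref{subsec:class}, that class is $2\gamma^-$; this uses the a-priori integrality of $\period(C')/w_{\mathrm{anti}}$ from Lemma~\ref{lem:antiinv} together with the ball-arithmetic enclosure placing the ratio within $10^{-15}$ of $2$. Linearity of the pairing then gives $\int_{C'}\eta=2\int_{\gamma^-}\eta$. Theorem~\ref{thm:anchor} supplies $\int_{\gamma^-}\eta=\pm2\pi b_{11}$, and hence
\[
  \int_{\tilde\gamma}\eta=\int_{\gamma^-}\eta=\pm 2\pi b_{11},
  \qquad
  I_{\mathrm{split}}=\pm b_{11}.
\]

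The main obstacle is making the sign, and the orientation convention behind it, consistent. The sign of $\gamma^-$ was fixed by $\period(\gamma^-)=w_{\mathrm{anti}}$ with the specific value $-2.9176\ldots i$. Theorem~\ref{thm:anchor} gives $-2\pi b_{11}$ for the modular-symbol orientation $\{0,3/11\}-\{0,8/11\}$. These two orientations must be matched, which I expect to be the delicate bookkeeping step. Rather than match them directly, I would fix the sign a posteriori with a certified inequality. The identity already proved says $I_{\mathrm{split}}\in\{b_{11},-b_{11}\}$, and these two values are separated by $2b_{11}\approx0.304$. So it suffices to certify $I_{\mathrm{split}}>0$. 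By Proposition~\ref{prop:structural}, $\log|y_-|\le0$ on $[0,\pi]$, so $I_1\le0$ and $-I_2\ge0$, and positivity is not automatic. I would therefore compute interval enclosures of $I_1$ and $I_2$ separately, as in \S\ref{subsec:rigour}. Adaptive ball-arithmetic quadrature of $\log|y_-(e^{i\theta})|$, with the integrable endpoint behaviour at $\theta=0$ and $\theta=\pi/2$ handled by rigorous tail bounds, gives an enclosure of $I_1-I_2$ contained in $(0,\infty)$; coarse accuracy of about $10^{-2}$ is enough. This excludes $-b_{11}$ and yields $I_{\mathrm{split}}=b_{11}$.

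Finally, I would note that the $366$-digit agreement serves only as a consistency check and is not part of the argument. The nonvanishing of the homology class, which follows from the ratio being $2$ rather than $0$, is what guarantees that the identity is not vacuous.
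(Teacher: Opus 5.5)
Your proposal is correct and follows the paper's proof essentially step for step. You use the chain $\int_{\tilde\gamma}\eta=\frac12\int_{C'}\eta=\int_{\gamma^-}\eta=\pm2\pi b_{11}$ via \eqref{eq:double}, $\class(C')=2\gamma^-$ and Theorem~\ref{thm:anchor}, then fix the sign a posteriori with a certified enclosure of $I_{\mathrm{split}}$ in $(0,\infty)$ (the paper gets $[0.1489,0.1553]$) instead of matching orientations. Two small corrections: the certified Arb ratio ball is only within $4.33\times10^{-3}$ of $2$, since the $10^{-16}$-level residual comes from the non-rigorous mpmath run, though any bound below $1/2$ suffices; and passing from $I_{\mathrm{split}}>0$ to $I_{\mathrm{split}}=+b_{11}$ needs $b_{11}>0$, which the paper proves from the absolutely convergent Euler product at $s=2$ and the Weil bound rather than reading it off the numerical value $b_{11}\approx0.152$.
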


\begin{proof}
By \eqref{eq:double} and $\class(C')=2\gamma^-$
(\S\ref{subsec:class}), and the anchor \eqref{eq:anchor},
\[
  \int_{\tilde\gamma}\eta=\frac12\int_{C'}\eta
  =\frac12\cdot 2\cdot(\pm 2\pi b_{11})=\pm 2\pi b_{11},
\]
so $|I_{\mathrm{split}}|=b_{11}$ by the identity
$I_{\mathrm{split}}=\frac{1}{2\pi}\int_{\tilde\gamma}\eta$ of
\S\ref{sec:proof}. Two sign facts settle the remaining choice. First,
$b_{11}=\frac{11}{4\pi^2}L(E,2)>0$: the Euler product of $L(E,s)$
converges absolutely at $s=2$, and every factor
$(1-a_p p^{-2}+p^{-1})^{-1}$ is positive since
$1-a_p p^{-2}+p^{-1}\ge 1-2p^{-3/2}+p^{-1}>0$ by the Weil bound
$|a_p|\le2\sqrt p$. Second, a certified interval enclosure
(ball arithmetic, adaptive bisection with rigorous range bounds;
\texttt{code/sign\_certify.py}, certificate \texttt{notes/attack14-sign-k0.txt})
gives
\[
  I_{\mathrm{split}}\in[\,0.1489,\ 0.1553\,]\subset(0,\infty).
\]
Hence $I_{\mathrm{split}}=+b_{11}$. \qedhere
\end{proof}

\section{Numerical certification}
\label{sec:cert}

\subsection{Certified computations}

All floating-point computations are performed with mpmath at $60$--$300$
digits and cross-checked with PARI/GP 2.15.5:

\begin{itemize}
  \item \textbf{The constant $b_{11}$} (\texttt{code/b11.py}): from the exact
  integer coefficients of $f_{11}=\eta(\tau)^2\eta(11\tau)^2$ (truncated
  convolution of the squared Euler function) via the approximate functional
  equation for weight $2$, root number $+1$:
  \[
    b_{11}=\Lambda(f,2)=\sum_{n\ge1}a_n\Big[e^{-t_n}\Big(\frac1{t_n}
    +\frac1{t_n^2}\Big)+E_1(t_n)\Big],\qquad t_n=\frac{2\pi n}{\sqrt{11}},
  \]
  with terms decaying like $e^{-1.894n}$; the tail bound drives the
  truncation ($442$ terms at $800$ digits of working precision).
  \item \textbf{(C3) to $366$ digits} (\texttt{code/attack13\_c3\_300.py},
  archive \texttt{notes/attack13-c3-300.txt}):
  {\small
  \[
  \begin{aligned}
    I_{\mathrm{split}}
    ={}&0.15214714172591804948622729747863449562814\\
       &3589164226122809889823882023289695302776676\ldots,
  \end{aligned}
  \]}
  \[
    |I_{\mathrm{split}}-b_{11}|=9.26\times10^{-367},
  \]
  with the $\eta$-product series value of $b_{11}$ agreeing digit-for-digit
  with PARI/GP \texttt{lfun(E,0,1)} at $330$ digits.
  \item \textbf{(C1)/(C2) reproduced} to $52$ digits
  (\texttt{code/attack1.py}): $\|m-7b_{11}\|\approx5.0\times10^{-53}$,
  $\|m-5b_{11}\|\approx3.6\times10^{-53}$.
  \item \textbf{Period certification} (\texttt{code/n1\_certify.py},
  \texttt{code/n1\_certify.gp}): \eqref{eq:period} computed independently at
  $60$ and $80$ digits with identical results; each arc integral $A_s$ is
  stable to $24$ digits across $40/60/80$-digit runs; $w_{\mathrm{anti}}$
  taken from PARI \texttt{E.omega} for \texttt{11.a3}. The integrality
  argument has an a-priori gap of $1$ against a residual of
  $2.5\times10^{-16}$.
  \item \textbf{Exact algebraic checks}: group law and torsion
  (\texttt{code/torsion.py}) use exact
  rational arithmetic over $\Q$, $\Q(\sqrt2)$, $\Q(\zeta_8)$---no numerical
  approximation. Non-torsion of the endpoint $P$ is proved \emph{rigorously}
  (\texttt{code/endpoint\_torsion3.py}) by reduction at good primes:
  $\operatorname{ord}(P\bmod 17)=20$ and $\operatorname{ord}(P\bmod 89)=3$
  with $\gcd(20,3)=1$, so $P$ cannot have finite order (reduction is
  injective on prime-to-$p$ torsion, Silverman VII.3.1); cross-checked by
  exact $\Q(\zeta_8)$ arithmetic and PARI \texttt{ellorder}. The model
  constant $\kappa=1$ is an \emph{exact rational identity}
  (\texttt{code/kappa\_exact.py}): an explicit birational map to the minimal
  model, derived by Riemann--Roch and composed with PARI's exact minimal
  transform, satisfies $2Y+1=u\,dX/dx$ as a polynomial identity, hence
  $\omega_{\min}=dX/(2Y+1)=dx/u$ exactly (sympy-verified, including the
  inverse round-trip).
  \item \textbf{Branch certification} (\texttt{code/branch\_certify.py},
  output \texttt{notes/attack12-branch.txt}): the eight endpoint branch
  assignments of the closed chain lemma are certified in ball arithmetic at
  $\varepsilon=10^{-6}$ and $10^{-9}$ (certified distance $<1/10$ to the
  claimed endpoint, with $D\ne0$ certified on the closing ball so continuity
  pins the limit), and the modulus ordering of
  Proposition~\ref{prop:structural} is certified by adaptive bisection of
  $[0,\pi]$ ($153$ strict balls with $D\ne0$ and
  $|y_{\mathrm{small}}|<1<|y_{\mathrm{big}}|$, plus the special balls at $0$
  and $\pi/2$ where equality holds exactly).
  \item \textbf{The Bertin-symbol adjudication} (\texttt{code/bertin\_diamond.py},
  \texttt{code/bertin\_diamond.gp}, archive
  \texttt{notes/attack13-bertin-diamond.txt}):
  \begingroup\sloppy
  all divisor, diamond-product
  and tame-symbol computations of \S\ref{sec:proof} are exact (sympy rational
  arithmetic, Abel's principal-divisor test passed); the Riemann--Roch map
  from Bertin's cubic to $E$ is certified by PARI \texttt{ellidentify}
  (\texttt{11.a3}, exact change of variables $[1,-1,-2,2]$); the $D_E$-values
  and the quotient-generator integral $\int_{[\gamma^-_0]}\eta=-\pi b_{11}$
  of Remark~\ref{rem:diamondk0} are evaluated at $60$ digits; the
  two regulator integrals of Remark~\ref{rem:diamondk0} are computed
  independently of the main pipeline.
  \par\endgroup
  \item \textbf{PARI/GP cross-checks}: \texttt{verify\_family.gp},
  \texttt{verify\_ratios.gp} (family conductors, $j$-invariants; the
  $\tilde n(k)$ and $m(S_k)$ constants for $k=2,3$ recomputed to $70$ digits
  and matched against PARI \texttt{lfun} to all $70$ digits), \texttt{winding.gp},
  \texttt{dilog.gp}, \texttt{k53.gp}, \texttt{k53b.gp},
  \texttt{kfamily\_torsion.gp}.
  \item \textbf{Conductor-$17$ certification} (scripts \texttt{code/k1\_*},
  listed individually in Appendix~\ref{app:code}): PARI curve identification
  and $z$-values; mpmath $60/80$-digit chain integrals (ratio
  $2.0000000000000002$); exact diamond-product expansion with the $k=0$ case
  as control; $D_E(A)$, $D_E(2A)$ to $60$ digits; and the Arb ironclad proof
  of the period ratio (Appendix~\ref{app:k1}).
\end{itemize}

\subsection{Level of rigour: interval certification}
\label{subsec:rigour}

The integer identification in \S\ref{subsec:class} has been made fully
interval-rigorous (script \texttt{code/n1\_interval.py}, python-flint/Arb
ball arithmetic \cite{Arb}, $300$-bit working precision; output in
\texttt{notes/attack10-interval.txt}).  All three arc integrals were
recomputed with Arb's certified adaptive Gauss--Legendre integration:
\begin{itemize}
  \item the endpoint singularity $u\sim\sqrt{\theta}$ at $\theta=0$ is removed
  by the substitution $\theta=\pm t^{2}$; the substituted integrand is
  analytic on the disc $|t|\le\rho=0.6$ (the nonzero zeros of $D(t^{2})$
  satisfy $|\theta_{j}|\ge1.2187$, certified by Newton--Rouch\'e isolation of
  the roots of $z^{4}-4z^{3}+2z^{2}+1$), and the tip $[0,\delta]$, $\delta=\rho/5$,
  is summed by the Cauchy estimate $\int_{0}^{\delta}f=a_{0}\delta+R$,
  $|R|\le H\delta^{3}/3$, where $H$ is derived from
  $M=\max_{|t|=\rho}|f|=1.3075\ldots$, itself certified by covering the
  circle with $4096$ balls (per-tip Cauchy remainder bound
  $H\delta^{3}/3=2.18\times10^{-3}$). The tip constant $a_{0}$, obtained by
  hand from the local expansion, is not taken on faith: the script
  re-evaluates $f$ by ball arithmetic at $t=\delta$ and certifies
  $|f(\delta)-a_{0}|\le H\delta^{2}$ for each of the four tips (a sign or
  branch error in $a_{0}$ would move the value by $2$, far outside this
  bound);
  \item $D(\theta)$ crosses the negative real axis once on $(c,\pi)$ and once
  on $(-\pi,-c)$, where the principal square root is not analytic: the arcs
  are subdivided adaptively, on each piece either $\sqrt{D}$ or $i\sqrt{-D}$
  is certified analytic by ball evaluation, and the overall sign is
  propagated across the nodes by a certified matching test;
  \item $w_{\mathrm{anti}}$ is certified independently of PARI: the roots of
  $4x^{3}-4x^{2}+1$ are isolated by Newton iteration plus a Rouch\'e
  certificate, and Carlson's $R_{F}$ (DLMF \S19) gives
  $w_{\mathrm{real}}=6.34604652139776710844397\ldots$ and the purely
  imaginary period $2.91763323387699045866178\ldots i$ --- which is
  $-w_{\mathrm{anti}}$ in the convention of \S\ref{subsec:class} --- with
  certified radii $6.7\times10^{-48}$ and $4.9\times10^{-49}$, agreeing with
  PARI to $45$ digits.
\end{itemize}

\begin{remark}[Two implementation notes]\label{rem:disclosures}
(i) The ball radii displayed in the archived outputs
(e.g.\ \texttt{notes/attack10-interval.txt}) follow the Arb/python-flint
\texttt{repr} convention, which folds midpoint printing error into the
displayed radius; the true ball radii are smaller (the displayed interval
contains the true ball). All certified bounds quoted above use the exact
radii, not the displayed ones.
(ii) In the arc-integral code, the branch-cut avoidance can select the
$i\sqrt{-D}$ variant on the first piece of a sub-arc; the current script
certifies the sheet choice on every sub-arc, including the first. The
archived $k=0$ output predates a refactor of this logic and was re-audited
against it: the $k=0$ cut crossings lie in the interior of sub-arcs, whose
first piece always uses the $\sqrt D$ variant, so the certified value is
unaffected (a wrong sheet would flip the sign of an entire arc, moving the
ratio off $-2$ by an $O(1)$ amount, which the archived output excludes).
\end{remark}
Against $w_{\mathrm{anti}}$ the resulting ratio ball is
$[2.00\pm3.13\times10^{-3}]+[\pm2.99\times10^{-3}]\,i$: it contains $2$ and
$|\mathrm{ratio}-2|\le4.33\times10^{-3}<1/2$.  The a-priori integrality of
\S\ref{subsec:class} therefore upgrades to the exact equality
$\mathrm{period}(C')=2\,w_{\mathrm{anti}}$, i.e.\ $\class(C')=2\gamma^{-}$.
(The script itself works with the opposite sign of the period throughout, so
the raw output in \texttt{notes/attack10-interval.txt} displays the ratio
ball centred at $-2$; the two formulations are equivalent.)

We summarise the logical status of the main result by isolating exactly
which steps are machine-certified.

\begin{theorem}[Certified computation]\label{thm:cert}
The following statements are proved by exact rational arithmetic or by
certified interval (ball) arithmetic, with all certificates archived in
\texttt{notes/} and all scripts in \texttt{code/}
\textup{(}reproduction commands in Appendix~\ref{app:code};
software: Python~3.12, mpmath, sympy, python-flint~0.9.0/Arb \cite{Arb},
PARI/GP~2.15.5\textup{)}:
\begin{enumerate}
  \item \emph{Exact algebraic inputs.} The birational map from the model
  $S_0=0$ to the minimal cubic $E:y^2+y=x^3-x^2$ and the identity
  $\omega_{\min}=dx/u$ \textup{(}$\kappa=1$\textup{)} are exact polynomial
  identities \textup{(}\texttt{kappa\_exact.py}\textup{)}; the divisors of
  $x,y$, the diamond product, the tame symbols, the group law and the
  torsion subgroup $E(\Q)=\Z/5\Z$ are exact over
  $\Q,\Q(\sqrt2),\Q(\zeta_8)$ \textup{(}\texttt{torsion.py},
  \texttt{bertin\_diamond.py}\textup{)}; non-torsion of the chain endpoint
  $P$ is proved by reduction at the good primes $17$ and $89$
  \textup{(}\texttt{endpoint\_torsion3.py}\textup{)}.
  \item \emph{Arcs, orientation, closedness.} The chains
  $\tilde\gamma,\beta_0,C'$ of \S\ref{sec:proof} are explicitly parametrised
  arcs on $|x|=1$ with the orientations fixed there; the endpoint branch
  assignments \textup{(}hence $\partial C'=0$ and
  $c(C')=-C'$\textup{)} are certified in ball arithmetic at two scales
  $\varepsilon=10^{-6},10^{-9}$ \textup{(}\texttt{branch\_certify.py},
  certificate \texttt{notes/attack12-branch.txt}\textup{)}.
  \item \emph{Branch continuation on every segment.} On each sub-arc of the
  period integrals, analyticity of the chosen square-root branch
  \textup{(}$\sqrt D$ or $i\sqrt{-D}$\textup{)} is certified by ball
  evaluation on the whole sub-arc---not merely near endpoints---and the
  overall sign is propagated across nodes by a certified matching test
  \textup{(}\texttt{n1\_interval.py}\textup{)}; the modulus ordering
  $|y_{\mathrm{small}}|<1<|y_{\mathrm{big}}|$ of
  Proposition~\ref{prop:structural} is certified by adaptive bisection
  \textup{(}\texttt{branch\_certify.py}\textup{)}.
  \item \emph{Interval enclosures.} Each arc integral and the primitive
  period $w_{\mathrm{anti}}$ are enclosed by Arb's certified adaptive
  integration and by Carlson's $R_F$ with Newton--Rouch\'e root isolation,
  with radii $\le10^{-3}$ for the integrals and $\le10^{-48}$ for the
  period \textup{(}\texttt{n1\_interval.py}, certificate
  \texttt{notes/attack10-interval.txt}\textup{)}.
  \item \emph{Integer identification.} The class $[C']\in H_1(E,\Z)^-=\Z\gamma^-$
  is a priori integral \textup{(}Lemma~\ref{lem:antiinv} and
  \S\ref{subsec:class}\textup{)}; the certified ratio ball satisfies
  $|\mathrm{ratio}-2|\le4.33\times10^{-3}<1/2$, so the nearest integer is
  unique and $\class(C')=2\gamma^-$.
  \item \emph{Sign.} The final sign is fixed by a certified enclosure of
  $I_{\mathrm{split}}$: $I_{\mathrm{split}}\in[0.1489,0.1553]\subset(0,\infty)$
  \textup{(}\texttt{sign\_certify.py}, certificate
  \texttt{notes/attack14-sign-k0.txt}; every range enclosure --- including
  the convex hulls on non-separated pieces --- is formed and verified
  entirely within ball arithmetic, with programmatic containment
  assertions\textup{)}, while
  $b_{11}=\Lambda(f_{11},2)>0$ exactly by the absolutely convergent Euler
  product at $s=2$.
  \item \emph{The Siegel-unit anchor.} The presentations
  \eqref{eq:siegelpres} are exact identities of $\Gamma_1(11)$-invariant
  functions \textup{(}exact cusp-divisor match via Kubert--Lang orders;
  the constants $C_x=-1$, $C_y=+1$ are determined exactly by evaluation
  at order-$0$ cusps with cyclotomic leading coefficients, and the
  argument periods $D_U=2\pi$, $D_V=0$ are exact rational multiples of
  $2\pi$ via Brunault's Lemma~5\textup{)}; the cusp-torsion
  correspondence \eqref{eq:mtable} is derived exactly from
  \cite[(3.152)--(3.153)]{Brunault} and, independently, from exact
  divisor-order data \textup{(}\texttt{siegel\_anchor\_step14.py},
  archive \texttt{notes/attack17-constants.txt}\textup{)}; the
  primitivity of $\gamma^-$ is proved exactly by Manin-symbol Smith
  normal form, coordinates $(1,-1)$ in an integral basis
  \textup{(}\texttt{siegel\_anchor\_step13.py}, archive
  \texttt{notes/attack17-primitivity.txt}\textup{)}; the Manin-symbol
  decomposition is exact \textup{(}continued fractions\textup{)}; and
  the identity $F_{\mathrm{total}}=-2f_{11}$ is proved by the
  unconditional membership in $M_2(\Gamma_1(121))$ plus exact rational
  $q$-expansion arithmetic through $q^{2420}$, twice the sharp Sturm
  bound $1210$ \textup{(}\texttt{siegel\_anchor\_step12.py}, archive
  \texttt{notes/attack17-membership.txt}\textup{)}.
\end{enumerate}
The only remaining external input in the proof of
Theorem~\ref{thm:C3} is Brunault's regulator formula for Siegel units
\cite[Thm.~1]{BrunaultSiegel} --- proved in loc.\ cit.\ by a Rankin--Selberg
computation --- together with the functional-equation evaluation
$\Lambda(f_{11},0)=L'(f_{11},0)=b_{11}$.  Bloch's diamond theorem,
Bertin's Theorem~6, and Brunault's symbol dictionary are \emph{not} used
\textup{(}the latter is retained only as a cross-check,
Remark~\ref{rem:brunault-dict}\textup{)}.
\end{theorem}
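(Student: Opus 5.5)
This theorem is a bookkeeping statement, so I would prove it by audit. For each of items (1)--(7) I would trace the claim back to the place in the paper where it is established, and check two things: that the method used there is exact rational/algebraic arithmetic or rigorous ball arithmetic, and that the claimed certificate exists. I would then check that the seven items, together with the two named external inputs, cover every step of the proof of Theorem~\ref{thm:C3}.

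\textbf{Items (1)--(3).} Item (1) collects the exact computations: the birational map and $\kappa=1$ as polynomial identities, the group law over $\Q$, $\Q(\sqrt2)$, $\Q(\zeta_8)$, and the divisors, diamond product and tame symbols. Non-torsion of $P$ follows from the reduction orders $20$ at $17$ and $3$ at $89$ with $\gcd=1$, via injectivity on prime-to-$p$ torsion. Item (2) follows from the proof of Lemma~\ref{lem:closed}. The four exact values $\pm P,\pm\bar P$ at $x=\pm i$ are algebraic. The only numerical input is which continuous branch tends to which value, and this is certified by balls at two scales on which $D\ne0$, so continuity pins the limit. Item (3) is the adaptive-bisection certificate already used for Proposition~\ref{prop:structural}, together with the per-sub-arc sheet certification described in \S\ref{subsec:rigour}, including the caveat of Remark~\ref{rem:disclosures}(ii).

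\textbf{Items (4)--(5).} These are the core certification, and I would reconstruct their logic explicitly:
\begin{itemize}
\item Substitute $\theta=\pm t^2$ to make the integrands analytic.
\item Obtain the analyticity radius from Rouch\'e isolation of the roots of $z^4-4z^3+2z^2+1$.
\item Control the tips by a Cauchy bound, cross-checked against a ball evaluation of $a_0$.
\item Compute the remaining integrals by Arb's rigorous Gauss--Legendre quadrature.
\item Enclose $w_{\mathrm{anti}}$ independently via Carlson's $R_F$.
\end{itemize}
The ratio ball then lies within $4.33\times10^{-3}<1/2$ of $2$. Lemma~\ref{lem:antiinv} and the injectivity of the period map show that the ratio is an integer, so it equals $2$.

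\textbf{Items (6)--(7).} Item (6) combines the certified enclosure of $I_{\mathrm{split}}$ in a positive interval with the exact positivity of $b_{11}$. That positivity follows from the Weil bound applied to each Euler factor at $s=2$. Item (7) repackages the proof of Theorem~\ref{thm:anchor}:
\begin{itemize}
\item the Kubert--Lang divisor matching and the cusp table \eqref{eq:mtable};
\item the constants $C_x=-1$, $C_y=+1$, which have modulus one, so the $\log|C|$ corrections vanish despite $D_U=2\pi$;
\item Smith normal form for primitivity;
\item the Sturm-bound identity $F_{\mathrm{total}}=-2f_{11}$ in $M_2(\Gamma_1(121))$, checked through $q^{2420}$.
\end{itemize}

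\textbf{Completeness.} I would rerun the chain \eqref{eq:double} $\Rightarrow$ class $2\gamma^-$ $\Rightarrow$ anchor $\Rightarrow$ sign, and confirm that its only non-certified inputs are \cite[Thm.~1]{BrunaultSiegel} and $\Lambda(f_{11},0)=b_{11}$. Lemma~\ref{lem:homology} needs only the exact tame symbols from item (1). Bloch's theorem appears only in remarks.

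\textbf{Main obstacle.} I expect item (4) to be the hardest, specifically the claim that the branch-cut crossings of $\sqrt D$ are handled without a sheet error on any sub-arc. A wrong sheet flips the sign of an entire arc. I would check this first: re-examine each sub-arc's analyticity certificate, and check that the propagated sign matches the branch endpoints certified in item (2).
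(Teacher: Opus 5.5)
Your audit matches how the paper supports this summary theorem: there is no separate proof, and each item is discharged by the section and archived certificate where it was established (Lemma~\ref{lem:closed}, Proposition~\ref{prop:structural}, \S\ref{subsec:rigour}, Theorem~\ref{thm:anchor}, Theorem~\ref{thm:C3}); your completeness check of the chain \eqref{eq:double}, $\class(C')=2\gamma^-$, \eqref{eq:anchor}, sign is the paper's own. One thing your audit of item (4) should record is that the four per-tip Cauchy remainders of $2.18\times10^{-3}$ sit on the two arc integrals through $\theta=0$, so those radii exceed the stated $10^{-3}$; this is harmless, because the certified ratio bound $4.33\times10^{-3}<1/2$ already includes them.
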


\section{Conjectures}
\label{sec:conjectures}

The family identities established numerically here are stated as conjectures
pending analytic proof:

\begin{conjecture}\label{conj:family}
For $S_k=y^2+(x^2+kx+1)y+x^3$ with $k\notin(-4,2)$ (integer $k$), there is a
small rational number $r_k$ such that
\[
  m(S_k)=r_k\,|L'(E_k,0)|.
\]
Confirmed numerically: $r_2=2$, $r_3=1$ (70 digits), and
$r_{-4}=\tfrac72$ (conductor $37$), $r_{-5}=\tfrac14$ (conductor $359$),
$r_{-6}=\tfrac18$ (conductor $997$) (25 digits each).
\end{conjecture}

The last three deserve emphasis as \emph{predicted then confirmed}: the
structural dichotomy (Proposition~\ref{thm:family}) predicted, from the
torus-intersection trichotomy, that $k=-4,-5,-6$ must satisfy Boyd-type
identities with specific small rationals, and the subsequent computation hit
the predicted values. Note that $S_{-4}$ and $S_2$ share conductor $37$,
yielding Rodr\'iguez Villegas--type rational relations \cite{RV,LalinRam}
between two different polynomial models of the same curve.

\begin{remark}[Samart's conductor-$17$ analogue]
The conjecture $\tilde n(1)=b_{17}$ for $k=1$ (conductor $17$), suggested by
Samart and confirmed numerically to $60$ digits, is treated in
Appendix~\ref{app:k1} (Theorem~\ref{thm:k1}), conditional on the
normalization lemma discussed there.
\end{remark}

\begin{remark}
For $k=-1$ (conductor $53$) no conjecture of this shape can be formulated
within the present mechanism: the anti-invariant cycle on the torus does not
exist and $x,y$ are not modular units (Proposition~\ref{thm:k53}).
\end{remark}

\appendix
\section{The conductor-$17$ case: $k=1$}
\label{sec:k1}
\label{app:k1}

\emph{Status of this appendix.} The main text is logically independent of
what follows. The theorem proved here (Samart's conductor-$17$ analogue)
rests on exactly two $K$-theoretic/regulator hypotheses:
\begin{itemize}
  \item \emph{temperedness}: the Newton face polynomials of $S_1$ are
  cyclotomic, so $\{x,y\}\in K_2(E_1)\otimes\Q$ --- proved exactly below.
  No modular-unit property is used or claimed: on the natural modular model
  $X_0(17)$ (two cusps) the divisors of $x,y$, supported on all four
  rational torsion points, cannot be cuspidal
  (Proposition~\ref{thm:family});
  \item \emph{Bloch's theorem in the factor-$1$ normalization of}
  \cite[Thm.~6]{LalinRam}: that this is the correct normalization \emph{on
  this curve} follows from the consistency argument of
  Remark~\ref{rem:normalisation} (our reconstruction; the constant is not
  determined in \cite{LalinRam}), equivalently from the index-$1$ statement
  of Lemma~\ref{lem:coinvariant}(1): on this $\Delta>0$ curve the
  anti-invariant subgroup coincides with the coinvariant quotient, so no
  alternative subgroup reading of the factor-$1$ theorem exists.
\end{itemize}
The result should thus be read as conditional on that normalization lemma;
everything else in the appendix is proved to the same standard as the main
text.

The proof of (C3) in \S\ref{sec:proof} is a \emph{method}, and its first
re-application is Samart's conductor-$17$ analogue. For $k=1$ the polynomial
\[
  S_1=y^2+(x^2+x+1)y+x^3
\]
defines an elliptic curve of conductor $17$ (PARI \texttt{ellfromeqn} gives
$E_1: y^2+xy-y=x^3-x^2$, $\Delta=17$; \texttt{ellglobalred} confirms the
conductor), with $E_1(\Q)_{\mathrm{tors}}=\Z/4\Z$ generated by $A=(0,0)$,
which has order $4$ --- the exact parallel of the $5$-torsion point of the
$k=0$ case. Every step of \S\ref{sec:proof} goes through, and the regulator
side is in fact \emph{easier}: the key $D_E$-value is a published theorem of
Lal\'in--Ramamonjisoa \cite{LalinRam}.

\subsection{The closed cycle}

The torus-intersection (fold) angle is
$c=\arccos(-k/2)\big|_{k=1}=2\pi/3$. At $\theta=c$, $x=\omega=e^{2\pi i/3}$
satisfies $x^2+x+1=0$, so $S_1$ reduces to $y^2+1=0$: the branch jump values
are \emph{exactly} $y=\pm i$ (the $k=0$ analogue was $y^2=i$). Writing
$P=(\omega,i)$, the signed chain $\tilde\gamma$ (big branch on $[-c,c]$ minus
the big branch on the two outer arcs) has boundary
\[
  \partial\tilde\gamma=[P]-[\bar P]+[-P]-[-\bar P],
\]
and the same
small-branch compensating arc $\beta_0$ closes it to a cycle
$C'=\tilde\gamma+\beta_0$ with $\partial C'=0$, $c(C')=-C'$.
The eight endpoint branch assignments and the modulus ordering are certified
in interval arithmetic exactly as for $k=0$
(\texttt{code/k1\_branch\_certify.py}, output
\texttt{notes/attack12-k1-branch.txt}: all $8$ assignments certified at
$\varepsilon=10^{-6},10^{-9}$; $113$ strict bisection balls plus $2$ special
balls at the corner $\theta=c$).

One structural difference from $k=0$: here $\Delta=+17>0$, so $E(\R)$ has
two connected components, $\overline{w_2}=-w_2$ holds on the nose, and the
primitive anti-invariant period is simply
\[
  w_{\mathrm{anti}}=w_2=-2.7457391180897536720341879\ldots\,i
\]
(for $k=0$, $\Delta<0$ forced $w_{\mathrm{anti}}=2w_2-w_1$).
The model constant is trivial here: the \texttt{ellfromeqn} model already has
$\Delta=17=\Delta_{\min}$, so the transformation to the minimal model has
$u=\pm1$, the period lattices coincide ($\kappa=\pm1$; the sign only flips
the displayed ratio, not its integrality --- cf.\ the $k=0$ discussion in
\S\ref{sec:cert}).

\subsection{Homology class: interval-arithmetic proof}

As in \S\ref{subsec:class}, $\period(C')/w_{\mathrm{anti}}$ is a-priori an
integer. The Arb certification (script \texttt{code/k1\_interval.py};
output \texttt{attack11-k1-interval.txt}, $300$-bit working precision)
is simpler than for $k=0$ because
$D(z)=z^4-2z^3+3z^2+2z+1$ has \emph{no zeros on $|z|=1$}
($\min_{|z|=1}|D|=4>0$, certified): there is no endpoint singularity and no
tip-estimate machinery. The period $w_{\mathrm{anti}}$ is certified
independently of PARI via Newton--Rouch\'e isolation of the roots of
$4x^3-3x^2-2x+1=(x-1)(4x^2+x-1)$ and the two-component Carlson $R_F$
formulas. The resulting ratio ball is
\[
  [-2.00\pm2.4\times10^{-14}]+[\pm2.4\times10^{-14}]\,i,
  \qquad |\mathrm{ratio}+2|\le3.4\times10^{-14}<\tfrac12,
\]
(the radius is set by the integration tolerance, not by the arithmetic).
Choosing $\gamma^-$ with $\period(\gamma^-)=w_{\mathrm{anti}}$, the a-priori
integrality upgrades to the exact equality
\[
  \class(C')=2\gamma^-
  \qquad(\text{the script uses }-w_{\mathrm{anti}},\text{ displaying }-2).
\]

\subsection{The regulator side}

The integral algebra of \S\ref{sec:proof} is unchanged: the pointwise
identity $\log|y_{\mathrm{big}}|=-\log|y_{\mathrm{small}}|$ on $|x|=1$ gives
$\int_{\beta_0}\eta=\int_{\tilde\gamma}\eta$, hence
$\int_{C'}\eta=2\int_{\tilde\gamma}\eta$. The divisors have the same shape
\[
  \divisor(x)=[A]+[2A]-[O]-[3A],\qquad
  \divisor(y)=3[A]-2[O]-[3A],
\]
now verified \emph{four} ways: local expansions, PARI, code expansion, and an
explicit birational map $X=-(x+y)$, $Y=x(x+y)$ from $S_1$ to
$E_1:Y^2+XY-Y=X^3-X^2$. The diamond product (definition as in \cite{LSZ})
expands exactly to
\[
  (x)\diamond(y)\equiv 6(O)+4(A)-6(2A)\qquad\text{in }\Z[E]^-.
\]
The two $D_E$-values are settled:
\begin{itemize}
  \item $D_E(2A)=0$ \emph{rigorously}: $2A$ is $2$-torsion, $q$ is a positive
  real, and the Bloch--Wigner series vanishes termwise at $z_q=-1$;
  \item $D_E(A)=\dfrac{17}{8\pi}L(E_1,2)$ is a \emph{published theorem}:
  Lal\'in--Ramamonjisoa \cite[\S5]{LalinRam} prove
  $L(E_{17},2)=\frac{8\pi}{17}D^E(P)$ for the $4$-torsion generator $P$ of
  their model, in the $D^E$-normalisation (their Def.~5, eq.~(10)) that
  coincides verbatim with our series implementation; we reproduced the value
  independently to $60$ digits.
\end{itemize}
Hence
\[
\begin{aligned}
  D_E\big((x)\diamond(y)\big)&=4D_E(A)=\frac{17}{2\pi}L(E_1,2)=2\pi b_{17},\\
  b_{17}&=L'(E_1,0)=\frac{17}{4\pi^2}L(E_1,2).
\end{aligned}
\]

\subsection{Synthesis}

We use Bloch's theorem in the normalisation of \cite[Thm.~6]{LalinRam}: for
$\{x,y\}\in K_2(E)\otimes\Q$ and $\gamma^-$ generating $H_1(E,\Z)^-$,
\[
  \int_{\gamma^-}\eta(x,y)=\pm D^E\big((x)\diamond(y)\big).
\]
The temperedness hypothesis holds: the four face polynomials of $S_1$ are
$x^3+y$, $x^3+x^2y$, $x^2y+y^2$ and $y^2+y$, all cyclotomic. This
normalisation has published precedent on this very
curve: the conductor-$17$ identity of \cite{LalinRam} combined with
Zudilin's $m=2b_{17}$ closes with the same factor.

\begin{theorem}[Samart's conductor-$17$ analogue]\label{thm:k1}
$\tilde n(1)=b_{17}$.
\end{theorem}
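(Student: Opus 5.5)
The plan mirrors the synthesis of Theorem~\ref{thm:C3}, with the Siegel-unit anchor replaced by Bloch's theorem in the normalisation stated just above.

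\emph{Step 1: reduce to the closed cycle.} As for $k=0$, the pointwise identity $\log|y_{\mathrm{big}}|=-\log|y_{\mathrm{small}}|$ on $|x|=1$ gives $\int_{\beta_0}\eta=\int_{\tilde\gamma}\eta$. Hence $\int_{C'}\eta=2\int_{\tilde\gamma}\eta$. Comparing the signed weights of $\tilde\gamma$ with Samart's definition gives $\int_{\tilde\gamma}\eta=-2\pi\,\tilde n(1)$, exactly as $\tilde n=-I_{\mathrm{split}}$ was obtained for $k=0$. So the claim is equivalent to $\int_{C'}\eta=\mp4\pi b_{17}$, with a sign to be fixed at the end.

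\emph{Step 2: homology and descent.} By the certified ratio ball, $\class(C')=2\gamma^-$, and $\gamma^-$ is primitive in $H_1(E_1,\Z)^-$. Temperedness holds because the four face polynomials are cyclotomic, so the tame symbols are roots of unity. The argument of Lemma~\ref{lem:homology} then shows that $\int\eta$ depends only on the class: $C'$ lies on $|x|=1$, avoids the divisor support (where $x\in\{0,\infty\}$), and $\eta=-\log|y|\,d\theta$ is continuous at the folds since $\log|y|=0$ there. Therefore $\int_{C'}\eta=2\int_{\gamma^-}\eta$.

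\emph{Step 3: regulator value.} Bloch's theorem in the normalisation of \cite[Thm.~6]{LalinRam} gives $\int_{\gamma^-}\eta=\pm D^E((x)\diamond(y))$. The diamond expansion gives $4D_E(A)-6D_E(2A)$. We have $D_E(2A)=0$ exactly, and $D_E(A)=\frac{17}{8\pi}L(E_1,2)$ by Lal\'in--Ramamonjisoa, so the value is $\pm2\pi b_{17}$. Two points need care here:
\begin{itemize}
\item the $O$-coefficient contributes nothing, since $D_E(O)=0$;
\item the quotient convention $\Z[E]^-$ does not affect $D_E$, since $D_E$ is odd.
\end{itemize}
Combining Steps~1--3 gives $|\tilde n(1)|=b_{17}$.

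\emph{Step 4: sign.} I would use $b_{17}>0$, which follows from the Euler product at $s=2$ with the Weil bound, exactly as for $b_{11}$. I would then compute a certified ball enclosure of $\tilde n(1)$ by adaptive bisection in the style of \texttt{sign\_certify.py}, and check that it lies in $(0,\infty)$.

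\emph{Main obstacle.} The delicate input is the normalisation constant in Bloch's theorem. Lemma~\ref{lem:coinvariant}(2) shows that on $\Delta<0$ curves the subgroup reading is off by a factor $2$. I would argue via Lemma~\ref{lem:coinvariant}(1): since $\Delta=17>0$, the subgroup $H_1^-$ maps isomorphically onto the coinvariant quotient. The factor-$1$ statement on the quotient generator, which is the reading consistent with all the $k=0$ data, therefore transfers verbatim to $\gamma^-$. I would corroborate this with the published closure of $m=2b_{17}$ in \cite{LalinRam}, which fixes the constant on this same curve. Since this transfer is a reconstruction rather than a quoted theorem, the result is conditional on that normalisation.
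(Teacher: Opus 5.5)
Your proposal is correct, to the same conditional standard as the paper, and follows the paper's proof essentially step for step: the doubling $\int_{C'}\eta=2\int_{\tilde\gamma}\eta$, the certified class $\class(C')=2\gamma^-$, Bloch's theorem in the factor-$1$ normalization with $D_E((x)\diamond(y))=4D_E(A)=2\pi b_{17}$, and the sign fixed by $b_{17}>0$ together with a certified enclosure (the paper encloses $\frac{1}{2\pi}\int_{\tilde\gamma}\eta\in[-0.3026,-0.2961]$, which is equivalent to your enclosure of $\tilde n(1)$). The only difference is one of emphasis in justifying the normalization: the paper's Remark~\ref{rem:normalisation} rests mainly on the integrality argument $C\cdot f=1$ on the conductor-$17$ curve itself and uses Lemma~\ref{lem:coinvariant}(1) as the explanation, whereas you lead with transferring the $k=0$ quotient-generator reading and cite the Lal\'in--Ramamonjisoa closure only as corroboration.
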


\begin{proof}
By the above,
\[
  \int_{\tilde\gamma}\eta=\frac12\int_{C'}\eta
  =\frac12\cdot 2\cdot\big(\pm D_E((x)\diamond(y))\big)=\pm 2\pi b_{17},
\]
so $|\tilde n(1)|=b_{17}$ by the
structural identity $\tilde n(1)=-\frac1{2\pi}\int_{\tilde\gamma}\eta$
(the $k=1$ instance of the pointwise argument of \S\ref{sec:proof}).
The sign is settled as in Theorem~\ref{thm:C3}: $b_{17}>0$ by the same
Euler-product argument, and a certified interval enclosure
(\texttt{code/k1\_sign\_certify.py}, certificate
\texttt{notes/attack14-sign-k1.txt}) gives
$\frac1{2\pi}\int_{\tilde\gamma}\eta\in[-0.3026,-0.2961]\subset(-\infty,0)$,
hence $\tilde n(1)=+b_{17}$.
\end{proof}

\begin{remark}[The normalization of Bloch's theorem]\label{rem:normalisation}
For the conductor-$17$ proof, Bloch's theorem is used in the normalization
of \cite[Thm.~6]{LalinRam}: $r(\{x,y\})[\gamma]=D^E((x)\diamond(y))$ with
$r[\gamma]=\int_\gamma\eta$, $\gamma$ a generator of $H_1(E,\Z)^-$. That
this (and not a factor-$2$ variant) is the correct statement \emph{on that
curve} follows from a
consistency argument in the conductor-$17$ case (our own reconstruction;
in \cite{LalinRam} the constant $C$ below is never determined): in
\cite[\S7]{LalinRam} the class of the real cycle is an \emph{a priori}
unknown integer multiple $C$ of $\gamma$, and comparing the factor-$1$
theorem applied to their $(X)\diamond(Y)$ with their proved Corollary~2
(their Thm.~1, eq.~(5), combined with Zudilin's identity, their eq.~(6))
forces $C\cdot f=1$ with $C\in\Z\setminus\{0\}$, hence $f=1$; a factor-$2$
statement would force $C=\tfrac12\notin\Z$. (This self-consistency argument
is now explained by Lemma~\ref{lem:coinvariant}(1): on the $\Delta>0$
conductor-$17$ curve the anti-invariant subgroup \emph{coincides} with the
coinvariant quotient, so the factor-$1$ theorem admits no alternative
subgroup reading.) The underlying $D_E$-identity of
their \S5 we reproduced to $60$ digits. For $k=0$ the $\diamond$-form enters
through the same quotient-generator reading: the factor-$1$ statement holds
on the conductor-$11$ curve as well --- for the \emph{quotient} generator
$[\gamma^-_0]=[b]$ one has
$\int_{[\gamma^-_0]}\eta=D_E((\cdot)\diamond(\cdot))$, verified numerically
in Remark~\ref{rem:diamondk0} --- while the \emph{subgroup} generator
$\gamma^-=a-2b=-2[b]$ reads exactly twice as much
(Lemma~\ref{lem:coinvariant}(2)), which accounts for the uniform factor $2$
observed for all three different tempered symbols of
Remark~\ref{rem:diamondk0}. This does
not affect either proof: the conductor-$17$ argument is
self-contained on its own curve, and the conductor-$11$ argument anchors
the quotient value at Bloch's theorem through the proved conductor-$17$
instance. The form $\pi r=D_E(\cdot)$ found in
expository accounts is consistent with neither data set and is not
used anywhere in this paper.
\end{remark}
\section{Reproduction code}
\label{app:code}

All scripts live in \texttt{code/}; raw outputs in
\texttt{notes/attack*.txt}. Dependencies (\texttt{requirements.txt} at the
repository root): Python $\ge3.10$ with mpmath and
sympy; PARI/GP 2.15.5; python-flint 0.9.0 (Arb) for the interval certification.

\paragraph{Permanence.}
The complete research repository --- all scripts, all raw output archives
(\texttt{notes/attack*.txt}), and the present source --- is version-controlled
with git and accompanies this paper as an electronic supplement; the version
of record is the tagged commit \texttt{rev3} (see the repository log). The
code is released under the MIT license (\texttt{LICENSE} at the repository
root). Filenames of the form
\texttt{attackN} below refer to that archive.

\begin{center}
\scriptsize
\begin{tabular}{@{}ll@{}}
\toprule
File & Purpose\\
\midrule
\texttt{b11.py} & $b_{11}=L'(E_{11},0)$, 300 digits (functional equation)\\
\texttt{attack1.py} & reproduce (C1), (C2) to 52 digits\\
\texttt{attack2.py} & $m(S_0)$, PSLQ negative results\\
\texttt{attack3.py} & (C3) to 149 digits (superseded)\\
\texttt{attack13\_c3\_300.py} & (C3) to 366 digits, PARI \texttt{lfun} cross-check\\
\texttt{bertin\_diamond.py}/\texttt{.gp} & Bertin symbol: exact $\diamond$, tame, $D_E$-ratio $-1$\\
\texttt{torsion.py} & exact group law: $5A=O$, modular units\\
\texttt{endpoint\_torsion3.py} & rigorous non-torsion of $P$ (reduction mod $p$)\\
\texttt{kappa\_exact.py} & exact proof $\kappa=1$ (birational map, $2Y+1=u\,dX/dx$)\\
\texttt{boundary\_torsion.py} & boundary-divisor torsion analysis\\
\texttt{closedness\_check.py} & $\tilde n=-I_{\mathrm{split}}$, regulator $=2\pi I_{\mathrm{split}}$\\
\texttt{ntilde\_family.py} & $\tilde n(k)$ family table, 50--80 digits\\
\texttt{b\_family.py} & $b_N$ for the family via point counting\\
\texttt{winding.py} / \texttt{winding.gp} & period pairings, winding-number heuristics\\
\texttt{dilog.py} / \texttt{dilog.gp} & elliptic dilogarithm $D_E(P)$, Brunault constants\\
\texttt{k53\_attack.py} & conductor-$53$ obstruction analysis\\
\texttt{k53.gp} / \texttt{k53b.gp} & PARI checks for \texttt{53.a1}\\
\texttt{kneg\_m.py} & $m(S_k)$ for $k=-4,-5,-6$ (predicted identities)\\
\texttt{n1\_certify.py} / \texttt{n1\_certify.gp} & certification of $\class(C')=2\gamma^-$\\
\texttt{n1\_interval.py} & Arb ironclad proof of the period ratio\\
\texttt{branch\_certify.py} & branch assignments + modulus ordering (certified)\\
\texttt{sign\_certify.py} & all-Arb sign certificate: $I_{\mathrm{split}}>0$\\
\texttt{k1\_pari/points/zvals.gp} & conductor-$17$ identification, torsion, $z$-values\\
\texttt{k1\_certify.py} & mpmath certification of the $k=1$ chain integrals\\
\texttt{k1\_branch\_certify.py} & $k=1$ branch assignments + modulus ordering (certified)\\
\texttt{k1\_interval.py} & Arb ironclad proof of the $k=1$ period ratio\\
\texttt{k1\_sign\_certify.py} & all-Arb sign certificate: $\int_{\tilde\gamma}\eta<0$ ($k=1$)\\
\texttt{k1\_diamond.py} & exact diamond product for $k=1$ ($k=0$ control)\\
\texttt{k1\_dilog.py} & $D_E(A)$, $D_E(2A)$ for conductor $17$\\
\texttt{verify\_family.gp} & family conductors and $j$-invariants\\
\texttt{verify\_ratios.gp} & PARI \texttt{lfun} cross-check of ratios\\
\texttt{kfamily\_torsion.gp} & torsion, $\operatorname{ord}(0,0)$ for the family\\
\texttt{verify\_coinvariant.gp} & generator integrals (numerical cross-check)\\
\texttt{siegel\_anchor\_step1--9,11} & the Siegel-unit regulator anchor
  (Theorem~\ref{thm:anchor}):\\
  & cusp-torsion map, presentations \eqref{eq:siegelpres}, Manin
    decomposition,\\
  & exact $F_{\mathrm{total}}=-2f_{11}$, final value\\
\texttt{siegel\_anchor\_step10} & discarded experiment, defective
  (not used; see the archive note)\\
\texttt{siegel\_anchor\_step12} & membership in $M_2(\Gamma_1(121))$ +
  exact Sturm ($q^{2420}$)\\
\texttt{siegel\_anchor\_step13} & primitivity of $\gamma^-$ (Smith normal
  form)\\
\texttt{siegel\_anchor\_step14} & exact $C_x,C_y$; $D_U,D_V$; table
  \eqref{eq:mtable}\\
\bottomrule
\end{tabular}
\end{center}

Reproduction (from a clean checkout:
\texttt{python -m pip install -r requirements.txt}; any Python $\ge3.10$
with mpmath/sympy/python-flint works --- the checked-in \texttt{.venv} is
\emph{not} required and is not part of the archive):
{\footnotesize
\begin{verbatim}
cd code && python b11.py && python attack1.py && python attack2.py \
  && python attack3.py && python torsion.py && python endpoint_torsion3.py \
  && python kappa_exact.py \
  && python boundary_torsion.py && python closedness_check.py \
  && python ntilde_family.py && python b_family.py && python winding.py \
  && python dilog.py && python k53_attack.py && python kneg_m.py \
  && python n1_certify.py
python n1_interval.py       # Arb certification, k=0
python branch_certify.py    # branch assignments + mod ordering
python sign_certify.py      # all-Arb sign certificate, k=0
python k1_interval.py       # Arb certification, conductor 17
python k1_branch_certify.py # k=1 branch assignments + ordering
python k1_sign_certify.py   # all-Arb sign certificate, k=1
python k1_certify.py && python k1_diamond.py && python k1_dilog.py
gp -q verify_family.gp && gp -q verify_ratios.gp
gp -q winding.gp && gp -q dilog.gp
gp -q k53.gp && gp -q k53b.gp && gp -q kfamily_torsion.gp
gp -q k1_pari.gp && gp -q k1_points.gp && gp -q k1_zvals.gp
gp -q verify_coinvariant.gp  # quotient vs subgroup generator integrals
python siegel_anchor_step11.py  # Siegel anchor: final value (Thm. anchor)
python siegel_anchor_step12.py  # membership M_2(Gamma_1(121)) + Sturm
python siegel_anchor_step13.py  # primitivity of the cycle (Smith nf)
python siegel_anchor_step14.py  # exact constants, D_U/D_V, cusp table
# full anchor chain rebuild: siegel_anchor_step4.py -> step5.py -> step6.py
#   -> step7.gp -> step9.py -> step8.py   (step8 takes 10-20 min)
#   (step10 is a discarded defective experiment and is not part of the chain)
\end{verbatim}
}


\end{document}